\documentclass[11pt]{article}

\usepackage{ulem}
\usepackage{amssymb,amsbsy,amsmath,amsfonts,amscd,amsthm}
\usepackage{xcolor}
\usepackage{bbm}
\usepackage{graphicx} 
\usepackage{multicol}
\graphicspath{{./}{immagini/}} 
\usepackage{tikz}
\usetikzlibrary{matrix}

\newcommand{\vl}{v^\lambda}
\newcommand{\wl}{w^\lambda}
\newcommand{\ds}{\displaystyle}
\newcommand{\med}[1]{\langle #1\rangle}
\newcommand{\R}{\mathbb{R}}

\newcommand{\Z}{\mathbb{Z}}
\newcommand{\N}{\mathbb{N}}

\newcommand{\EE}{\mathbb{E}}
\newcommand{\ZZ}{\mathbb{Z}}
\newcommand{\mS}{\mathbb{S}}
\newcommand{\mL}{\mathcal{L}}
\newcommand{\mA}{\mathcal A}

\newcommand{\mM}{\mathcal M}

\newcommand{\mT}{\mathcal T}
\newcommand{\mX}{\mathcal X}
\newcommand{\mV}{\mathcal V}

\def\a{\alpha}
\def\b{\beta}
\def\d{\delta}

\def\g{\gamma}
\def\l{\lambda}

\def\m{\mu}
\def\n{\nu}
\def\r{\rho}
\def\s{\sigma}

\def\G{\Gamma}

\def\th{\theta}

\def\t{\tau}
\def\e{\varepsilon}

\def\pd{\partial}
\def\half{\frac{1}{2}}

\newcommand{\cA}{{\cal A}}

\newcommand{\cE}{{\cal E}}

\newcommand{\cM}{{\cal M}}
\newcommand{\cL}{{\cal L}}

\newcommand{\cV}{{\cal V}}

%


\newcommand{\bm}{\bar m}
\newcommand{\hx}{{\hat x}}
\newcommand{\tx}{{\tilde{x}}}
\newcommand{\tphi}{{\tilde{\phi}}}

\newcommand{\bC}{{\bar{C}}}
\newcommand{\ue}{u^{\varepsilon}}

\DeclareMathOperator{\Wass}{\mathbf{d}}
\newtheorem{theorem}{Theorem}[section]
\newtheorem{lemma}[theorem]{Lemma}
\newtheorem{definition}[theorem]{Definition}
\newtheorem{proposition}[theorem]{Proposition}

\newtheorem{remark}[theorem]{Remark}

\numberwithin{equation}{section}

\begin{document}
\title{A continuous dependence estimate for viscous Hamilton-Jacobi equations on networks with applications}
\date{version: \today} 
\author{Fabio Camilli\footnotemark[1]  \and  Claudio Marchi\footnotemark[2]}
\maketitle
\footnotetext[1]{Dip. di Scienze di Base e Applicate per l'Ingegneria,  Universit{\`a}  di Roma  ``La Sapienza", via Scarpa 16, 00161 Roma, Italy  ({\tt  fabio.camilli@uniroma1.it}).}
\footnotetext[2]{Dip. di Matematica ``Tullio Levi-Civita'', Universit\`a di Padova, via Trieste, 35121 Padova, Italy ({\tt claudio.marchi@unipd.it}).}

\begin{abstract}
We study continuous dependence estimates for viscous Hamilton-Jacobi  equations  defined on a network $\G$.  Given two Hamilton-Jacobi equations, we prove an estimate of the $C^2$-norm of the difference between  the  corresponding solutions in terms of the   distance among the   coefficients. We also provide two applications of the previous estimate: the first one is an existence and uniqueness result for a quasi-stationary Mean Field Games defined on the network $\G$; the second one is an estimate of the rate of convergence for homogenization of Hamilton-Jacobi equations defined on a periodic network, when the size of the cells vanishes and the limit problem is defined in the whole Euclidean space.
\end{abstract}
\noindent
{\footnotesize \textbf{AMS-Subject Classification:} 35R02, 49N70, 91A16, 35B27}.\\
{\footnotesize \textbf{Keywords:} Network; viscous Hamilton-Jacobi equation; Kirchhoff condition; Mean Field Games; Homogenization}.

\section{Introduction}
In the recent years, there is an increasing interest in the study of  dynamical system on networks,  in connection with problem such as vehicular traffic, data transmission, crowd motion, supply chains, etc. As consequence, many results for linear and nonlinear PDEs  in the Euclidean  case  have been progressively  extended to the network setting and also to more general geometric structures. Here, we are interested in   continuous dependence estimates for viscous Hamilton-Jacobi (HJ for short) equations. Let us recall that such estimates play a crucial  role  in many contexts, for example for regularity results, error estimate for numerical schemes, rate of of convergence in vanishing viscosity and homogenization \cite{cdi,jk}.  \\
Our analysis is inspired by the results in \cite{marchi}, where it is proved a continuous dependence estimate in  the $C^2$-norm for  solutions of a viscous HJ   equations in the periodic setting with an explicit dependence on the distance of the coefficients and an explicit characterization of the constants. We prove an analogous result for viscous HJ equations defined on networks with Kirchhoff conditions at the vertices. To this end, we use some results concerning the study of these equations on networks \cite{adlt1,adlt2,cms} and suitably adapt the   arguments  in \cite{marchi} to this specific setting. \\
Then, the previous continuous dependence estimate   is applied to two problems:
\begin{description}
\item[(i)]  the well-posedness of a quasi-stationary Mean Field Games system defined  on a network;
\item[(ii)] an estimate  of the rate of convergence for    homogenization of HJ equations defined on a periodic networks.
\end{description}
Mean Field Games (MFG for short), introduced in \cite{ll}, modelize the interaction among a large number of agents. In this theory, the agents are assumed indistinguishable, infinitesimal and completely rational and their behaviour is influenced by the statistical distribution of the states of the other agents. In the classical formulation, MFG lead to the study of a coupled system of two evolutive PDEs, a backward HJ equation for the value function of the representative agent, a forward Fokker-Planck (FP for short) equation for the distribution of the agents. Recently, a different strategy mechanisms from classical MFG theory has been proposed in \cite{mouzouni} (see also \cite{cristiani_et_altri}): the agents are myopic and choose their strategy only according to the information available at present time, without forecasting the future evolution. In this case, the Nash equilibria for the distribution of the agents are characterized  by a quasi-stationary MFG system, which is composed of a stationary HJ equation and a evolutive Fokker-Planck equation.\\ 
While classical MFG on networks have been studied in \cite{adlt1, adlt2,cm}, here we consider a  quasi-stationary MFG defined  on a network and we prove existence and uniqueness of the corresponding solution. Existence is proved via a fixed point argument and the continuous dependence estimate is crucial since in this case it is not possible to exploit the regularizing effect of the parabolic HJ equation to  show the continuity of the fixed point map. The continuous dependence estimate is also exploited to prove uniqueness of the solution, which, with respect to the classical case, requires no monotonicity assumption.\\
The second application of the continuous dependence estimate is to a  homogenization problem. By means of the classical perturbed test function method (see \cite{evans}), we show that the solution of a viscous HJ equation, defined on a periodic lattice of size $\epsilon$, converges, as $\epsilon\to 0$, to the solution of an effective   problem defined in all the Euclidean space and we also give an estimate of the rate of convergence. Moreover, we obtain   a characterization of the corresponding effective operator in terms of the Hamiltonians defined on the edges of the lattice. We note that a similar problem  was studied  for first order HJ equations in \cite{im} and for linear second order equations in \cite{bnv}.\par
The paper is organized as follows: in Sect.~\ref{sect:definitions} we fix our setting and our notations for the network. Sect.~\ref{sec:continuous_dep_est} is devoted to our main result, the continuous dependence estimate for the solution to an HJ equation on the network. In Sect.~\ref{sec:quasi_stat} we tackle quasi-stationary MFGs on the network: in particular, we obtain existence and uniqueness of a solution without requiring any monotonicity assumption. Sect.~\ref{sec:homog} concerns the homogenization of HJ equations on a lattice: the main result is a rate of convergence estimate.
\section{The network $\G$: notations and definitions}\label{sect:definitions}
We consider a bounded network $\Gamma\subset\R ^N$  composed by  a  finite collection of   bounded   straight  edges $\mathcal{E}:=\left\{ \Gamma_{\alpha}, \alpha\in\mA\right\}$, which connect   a finite collection of vertices $\mV:=\left\{ \nu_{i}, i\in I\right\}$.  We assume that, for $\alpha,\beta \in \cA$ with $\alpha\not=\beta$,  $\Gamma_\alpha\cap \Gamma_\beta$ is either empty or made of  a single vertex. For an edge $\Gamma_{\alpha}\in\mathcal{E}$ connecting two vertices $\n_i$ and $\n_j$ with $i<j$, we consider the parametrization $\pi_\alpha: [0,\ell_\alpha]\to \Gamma_\alpha$ given by
\begin{equation*}
\pi_\alpha(y)=[y\nu_j+(\ell_\alpha-y)\nu_i]\ell_a^{-1}\quad\text{for } y\in  [0,\ell_\alpha],
\end{equation*}
where $\ell_\alpha$ is the length of the edge.  We also denote with $\mA_{i}=\left\{ \alpha\in\mA:\nu_{i}\in\Gamma_{\alpha}\right\} $  the set of indices of edges that are adjacent to the vertex $\nu_{i}$.\\
For a function  $v:\Gamma\rightarrow\mathbb{R}$,
we denote with $v_\alpha: (0,\ell_\alpha)\rightarrow\mathbb{R}$  the restriction of $v$ to $\Gamma_{\alpha}$, i.e.
\[
v_{\alpha} (y):=v|_{\Gamma_{\alpha}}\circ\pi_{\alpha} (y),\quad \hbox{ for all }y\in (0,\ell_\alpha).
\]
Moreover we define for $x\in \Gamma_{\alpha}\backslash{\mV}$ the derivative along the arc
\begin{equation*}
\partial_\alpha v(x)=\frac{d v_{\alpha}}{dy}(y)\quad \text{for  $y=\pi_{\alpha}^{-1} (x)$}.\label{eq: derivative}
\end{equation*}
\begin{remark}\label{rmk:extension}
The function~$v_\alpha$ is defined only on~$(0,\ell_\alpha)$; nevertheless, when it is possible, we denote $v_\alpha$ also its extension by continuity on~$0$ and on~$\ell_\alpha$. Note that, in this way, $v_\alpha$ may not coincide with the original function~$v$ at the vertices when $v$ is not continuous.
\end{remark}
For $x=\nu_i\in \Gamma_{\alpha}$, we define the outward derivative at the vertex
\begin{equation*}
\partial_{\alpha}v\left(\pi^{-1}_\alpha (\nu_i)\right):=\begin{cases}
{\displaystyle \lim_{h\rightarrow0^{+}}
\dfrac{v_{\alpha}(0)-v_{\alpha}(h)}{h}}, & \text{if }\nu_i=\pi_{\alpha}\left(0\right),\\[4pt]
{\displaystyle \lim_{h\rightarrow0^{+}}
\dfrac{v_{\alpha}(\ell_{\alpha})-v_{\alpha}(\ell_{\alpha}-h)}{h}}, & \text{if }\nu_i=\pi_{\alpha}\left(\ell_{\alpha}\right).
\end{cases}\label{eq: inward derivative}
\end{equation*}
Setting
\begin{equation*}
n_{i\alpha}=\left\{
\begin{array}[c]{rl}
1 & \text{if }   \nu_i =    \pi_{\alpha} (\ell_\alpha),\\
-1 & \text{if }  \nu_i =    \pi_{\alpha} (0),   
\end{array}\right.
\end{equation*}
we have
\begin{equation*}
\partial_\alpha  v(\nu_i)= n_{i\alpha} \, \partial v_ \alpha (\pi^{-1}_\alpha (\nu_i)) .
\end{equation*}

We introduce some functional spaces defined on the network $\Gamma$. The space
$C(\Gamma)$ is composed of the continuous functions on $\Gamma$; the space
\[
PC\left(\Gamma\right):=\left\{ v : \Gamma \to \R \;:  \hbox{$v_\alpha\in C([0,\ell_\alpha])$,  for all $\alpha\in\mA$}\right\}
\]
is composed of the piece-wise continuous functions on $\Gamma$, i.e. functions which are continuous inside the edges but not necessarily at the vertices.  
For $m\in\mathbb{N}$
\[
C^{m}\left(\Gamma\right):=\left\{ v\in C\left(\Gamma\right):v_{\alpha}\in C^{m}\left(\left[0,\ell_{\alpha}\right]\right)\text{ for all }\alpha\in\mA\right\} ,
\]
is the space of $m$-times continuously differentiable functions on $\Gamma$  endowed with the norm
$$ 
\left\Vert v\right\Vert _{C^{m}\left(\Gamma\right)}:= {\sum}_{\alpha\in\mA}{\sum}_{k\le m}\left\Vert \partial^{k}v_{\alpha}\right\Vert _{L^{\infty}\left(0,\ell_{\alpha}\right)}.
$$
For $\sigma\in\left(0,1\right]$,  the space $C^{m,\sigma}\left(\Gamma\right)$
contains the functions
$v\in C^{m}\left(\Gamma\right)$ such that $\partial^{m}v_{\alpha}\in C^{0,\sigma}\left(\left[0,\ell_{\alpha}\right]\right)$
for all $\alpha\in \mA$ with the norm
$$\displaystyle{
\left\Vert v\right\Vert _{C^{m,\sigma}\left(\Gamma\right)}:=\left\Vert v\right\Vert _{C^{m}\left(\Gamma\right)}+\sup_{\alpha\in\mA}} [\partial^{m}v_{\alpha}]_{\s, [0,\ell_\alpha]}
$$
where, for $\s \in (0,1]$ and $w:A\to\R$,
$$
[w]_{\s,A}=\sup_{y\ne z \atop y,z\in  A}\dfrac{\left|w\left(y\right)-w\left(z\right)\right|}{\left|y-z\right|^{\sigma}}.
$$
The integral of a function $v$ on $\Gamma$ is defined by
$$\int_{\Gamma}v(x)dx=\sum_{\alpha\in\mA}\int_{0}^{\ell_{\alpha}}v_{\alpha}\left(y\right)dy
$$
and we set $\med{v}=\int_{\Gamma}v(x)dx$. For $p\in [1,\infty]$, we  define the Lebesgue space  
$$L^{p}\left(\Gamma\right)  =\left\{ v:v_\a\in L^{p}\left((0,\ell_\alpha)\right)\text{ for all \ensuremath{\alpha\in\mA}}\right\},$$
endowed with the standard norm. For any integer $m\in\N$, $m\ge 1$, and   $p\in [1,\infty]$ 
we define the Sobolev space 
$$
W^{m,p}(\Gamma):=\left\{ v\in C\left(\Gamma\right):v_{\alpha}\in W^{m,p}\left((0,\ell_{\alpha})\right)\text{ for all }\alpha\in\mA\right\},
$$
endowed with the norm
$$
\left\Vert v\right\Vert _{W^{m,p}\left(\Gamma\right)}=\left(\sum^{m}_{k=1}\sum_{\alpha\in\mA}
\left\Vert \partial^{k}v_{\alpha}\right\Vert _{L^{p}\left(0,\ell_{\alpha}\right)}^{p}+
\left\Vert v\right\Vert _{L^p(\Gamma)}^{p}\right)^{\frac 1 p}.
$$
We also set $H^m(\Gamma)= W^{m,2}(\Gamma)$.\\
The couple $(\G,d_\G)$, where $d_\G$  is the geodesic distance    on the network, is a metric space. Denote with $\mM$ the space of  Borel probability measures on $\G$. For $1 \le p < \infty$, the $L^p$-Wasserstein distance $\Wass_p$ between $\s,\t \in\mM$ is defined by the Monge-Kantorovich transport problem
\begin{align*}
\Wass_p(\s,\t)=\min_{\Sigma\in\Pi(\s,\t)}\left\{\int_{\G\times\G}d_\G^p(x,y)d\Sigma(x,y) \right\}
\end{align*}
where  $\Pi(\s,\t)$ denotes the set of transport plans, i.e.   Borel probability measures on $\G\times\G$ with marginals $\s$ and $\tau$ (see \cite{bogachev}). Since $\G$ is compact, the Wasserstein distance $\Wass_p$ metrises the topology of weak convergence of probability measures on $\G$. In particular, for $p=1$, we have
\begin{equation}\label{eq:wass_dist}
\Wass_1(\s,\t)=\sup\left\{\int_\G f(x)d(\s-\t):\, f:\G\to\R,\,|f(x)-f(y)|\le d_\G(x,y)\right\}.
\end{equation}
We shortly recall the definition of diffusion process on the network $\G$ (see \cite{fw, fs} for details). Consider the linear   differential operator $\mL$ defined on the edges  by
\begin{equation*}
\mL_\a u(x)= \mu_{\alpha}\partial^{2}u(x)+B_\a(x)\partial u(x), \quad 
 x\in \Gamma_\alpha,\, \a\in \mA
\end{equation*}
with domain
\begin{equation*}
D\left(\mL\right)=\left\{ u\in C^{2}\left(\Gamma\right):\sum_{\alpha\in\mA_{i}}p_{i,\alpha}\partial_{\alpha}u\left(\nu_{i}\right)=0,\;i\in I \right\} \label{eq:Kir}
\end{equation*}
where $p_{i,\a}\in (0,1)$, $\sum_{\a\in \mA_i} p_{i,\a}=1$. 
Then, the  operator $\cL$ is the infinitesimal generator of a Feller-Markov process $(X_t, \alpha_t)$, with  $X_t\in \Gamma_{\alpha_t}$,  such that, for $x_t= \pi_{\alpha_t}^{-1}(X_t)$, we have
\begin{eqnarray}
\label{eq:diff}
dx_t=   B_{\alpha_t}(x_t) dt+\mu_{\alpha_t} dW_t  + d\ell_{i,t} +d h_{i,t}.
\end{eqnarray}
In \eqref{eq:diff}, $W_t$ is a one dimensional Wiener process; $\ell_{i,t}$ and $h_{i,t}$, $i\in I$, are continuous non-decreasing and, respectively, non-increasing processes,  measurable with respect to the $\sigma$-field generated by $(X_t,\alpha_t)$ and satisfying
\begin{align*}
\hbox{$\ell_{i,t}$ increases only when $X_t=\nu_i$ and $x_t=0$,}   \\
\hbox{$h_{i,t}$ decreases only when $X_t=\nu_i$ and $x_t=1$.} 
\end{align*}

\section{The continuous dependence estimate}\label{sec:continuous_dep_est}
We consider the following HJ equation on $\Gamma$
\begin{equation}
\begin{cases}
-\mu_{\alpha}\partial^{2}v+H\left(x,\partial v\right)+\rho=0, & x\in\left(\Gamma_{\alpha}\backslash\mV\right),\alpha\in\mA,\\
{\displaystyle \sum_{\alpha\in\mA_{i}}\gamma_{i\alpha}\mu_{\alpha}\partial_{\alpha}v(\nu_i)=0,} 
& \nu_{i}\in\mV,\\
v|_{\Gamma_{\alpha}}(\nu_i)=v|_{\Gamma_{\beta}}(\nu_i), & \alpha,\beta\in\mA_{i},\nu_{i}\in\mV,\\ \med{v}=0.
\end{cases}\label{eq:HJ}
\end{equation}
where $H: \Gamma\times\R\to \R$ is given for $x\in\Gamma_\a$ by
\begin{equation}\label{eq:hamiltonian}
H_\a(x,p)=\sup_{a\in A}\left\{ - b_\a(x,a)p-f_\a(x,a)\right\}.
\end{equation} 
Problem \eqref{eq:HJ} represents the dynamic programming equation for the optimal control problem  with long-run average cost functional
\[
\rho=\inf_{a }\,\liminf_{T\to \infty}\frac 1 T \EE_x\left[\int_0^T f(X_t,a_t)dt  \right]
\]
where $a_t$ is a feedback control law of form $a_t=a(X_t)$ and $X_t$  is a diffusion process on $\G$ such that $x_t= \pi_{\alpha_t}(X_t)$ satisfies \eqref{eq:diff} with $B_\a(x)=B_\a(x,a(x))$ (see \cite[Section 1.3]{adlt1} for more details). Connected with the optimal control interpretation of \eqref{eq:HJ}, the second equation is a Kirchhoff   transmission condition, where the quantity 
$p_{i\a}=\gamma_{i\alpha}\mu_{\alpha}(\sum_{\a\in\mA_i} \gamma_{i\alpha}\mu_{\alpha})^{-1}$ represents the probability that the trajectories of the diffusion process enter in edge $\G_\a$, $\a\in\mA_i$, from the vertex $\nu_i$; it can be also interpreted as a Neumann boundary condition if $\sharp(\mA_i)=1$.
 The third equation implies continuity of the solution at the vertices and the last one is a normalization condition.\\
We assume that $A$ is a compact  separable metric space  (for simplicity, $A$ is a subset of some Euclidean space) and we make the following assumptions
\begin{itemize}
\item[(H1)] $\mu_\a$ and $\g_{i,\a}$ are positive constants and
\[\sum_{\alpha\in\mA_{i}}\gamma_{i\alpha}\mu_{\alpha}=1, \qquad  \a\in \mA,\,i\in\mA_i.\]
\item[(H2)] $b_\a,f_\a:\G_\a\times A\to\R$ are continuous and there exist two constants $K$ and $L$ such that
\begin{equation}\label{hyp_b}
\begin{split}
&|b_\a(x,a)|\leq K,\qquad |b_\a(x_1,a)-b_\a(x_2,a)|\leq L|x_1-x_2|\\
&|f_\a(x,a)|\leq K,\qquad |f_\a(x_1,a)-f_\a(x_2,a)|\leq L|x_1-x_2|
\end{split}
\end{equation}
for all $x,x_1,x_2\in \Gamma_\a$, $a\in A$ and $\a\in\mA$.
\end{itemize}
For the study of the ergodic problem \eqref{eq:HJ}, it is expedient to introduce for $\l\in (0,1)$ the  discount   approximation 
\begin{equation}\label{eq:HJ_discount}
\begin{cases}
-\mu_{\alpha}\partial^{2}\vl+H\left(x,\partial \vl\right)+\l\vl=0, & x\in\left(\Gamma_{\alpha}\backslash\mV\right),\alpha\in\mA,\\[4pt]
{\displaystyle \sum_{\alpha\in\mA_{i}}\gamma_{i\alpha}\mu_{\alpha}\partial_{\alpha}\vl(\nu_i)=0,} 
& \nu_{i}\in\mV,\\[4pt]
\vl|_{\Gamma_{\alpha}}(\nu_i)=\vl|_{\Gamma_{\beta}}(\nu_i), & \alpha,\beta\in\mA_{i},\nu_{i}\in\mV.
\end{cases}
\end{equation}
The following statement concerns existence, uniqueness and regularity of classical solutions to the HJ equations \eqref{eq:HJ} and \eqref{eq:HJ_discount} (see \cite[Theorem II.2]{al}, \cite{cms} and \cite[Proposition 3.2 and Theorem 3.7]{adlt1}).
\begin{proposition}\label{lemma:bounds_discount_ergodic}
There exists a unique classical  solution $\vl$ to the equation \eqref{eq:HJ_discount}.
Moreover,
\begin{itemize}
\item[(i)] there exist a positive constant  $C_1$ and $\th\in(0,1)$, both independent of $\l$,  such that
\begin{align}
&\|\l \vl\|_{L^\infty(\G)} \le K,\label{eq:bound_1}\\
&\|\vl-\med{\vl}\|_{C^{2,\theta}(\G)} \le C_1(1+K+L)=:\bar K,\label{eq:bound_2}
\end{align}
where $K$, $L$ as in \eqref{hyp_b};
\item[(ii)] for $\l\to 0^+$, $\l \vl \to \r$, $\vl-\med{\vl}\to v$ and the couple $(v,\r)$ is the unique classical solution to \eqref{eq:HJ}. 		Moreover
\begin{equation}\label{eq:bound_3}
\|v\|_{C^{2,\theta}(\G)} \le \bar K.
\end{equation}
\end{itemize}
\end{proposition}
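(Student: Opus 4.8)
The plan is to first settle the discounted problem~\eqref{eq:HJ_discount} together with the bounds in~(i), and then recover the ergodic couple $(v,\r)$ by letting $\l\to 0^+$. For the discounted problem, existence and uniqueness of a classical solution $\vl$ follows by the methods of \cite{al,cms,adlt1}: e.g.\ Perron's method combined with the comparison principle for sub/supersolutions of viscous HJ equations on networks with Kirchhoff junction conditions (the continuity condition at the vertices being built into the notion of solution), after which interior elliptic regularity on each edge upgrades the viscosity solution to a classical one. For~\eqref{eq:bound_1}, observe that by~\eqref{eq:hamiltonian} and (H2) one has $|H(x,0)|=|\sup_{a}(-f_\a(x,a))|\le K$ for every $x$; hence the constants $\pm K/\l$ are, respectively, a supersolution and a subsolution of~\eqref{eq:HJ_discount} (the Kirchhoff and continuity conditions being trivially met by constants), and the comparison principle yields $\|\l\vl\|_{L^\infty(\G)}\le K$.

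The core of the argument is a Lipschitz bound on $\vl$ that is uniform in $\l\in(0,1)$. Following \cite[Prop.~3.2]{adlt1} (see also \cite{adlt2}), this is produced by a Bernstein-type computation on each edge, the Kirchhoff condition and (H1) being exploited to control the one-sided derivatives at the junctions so that the auxiliary function cannot concentrate at a vertex; the resulting bound depends only on $K$, $L$ and the $\mu_\a$. Since $\G$ is compact, setting $\wl:=\vl-\med{\vl}$ we then get $\|\wl\|_{L^\infty(\G)}\le \bC$ with $\bC$ independent of $\l$. Rewriting the first line of~\eqref{eq:HJ_discount} as $-\mu_\a\partial^2\wl=-H(x,\partial\wl)-\l\vl$, the right-hand side is bounded in $L^\infty$ by the previous step and the Lipschitz bound; Schauder estimates on each edge, up to the vertices using the Kirchhoff and continuity conditions, give $\wl\in C^{1,\th}(\G)$, whence the right-hand side is bounded in $C^{0,\th}$ and a second application yields $\|\wl\|_{C^{2,\th}(\G)}\le C_1(1+K+L)=:\bK$, which is~\eqref{eq:bound_2}.

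By~\eqref{eq:bound_2} and the compact embedding $C^{2,\th}(\G)\hookrightarrow C^2(\G)$, the family $\{\wl\}$ is precompact in $C^2(\G)$, while by~\eqref{eq:bound_1} the numbers $\l\med{\vl}$ stay bounded in $\R$. Along a subsequence $\l_n\to 0^+$ we thus have $\wl\to v$ in $C^2(\G)$ and $\l_n\med{v^{\l_n}}\to\r$, for some $v\in C^{2,\th}(\G)$ with $\med v=0$ and some $\r\in\R$. Since $\l\vl=\l\wl+\l\med{\vl}$ and $\l\wl\to 0$ uniformly, rewriting~\eqref{eq:HJ_discount} in terms of $\wl$, namely $-\mu_\a\partial^2\wl+H(x,\partial\wl)+\l\wl+\l\med{\vl}=0$, and passing to the limit shows that $(v,\r)$ is a classical solution of~\eqref{eq:HJ}: the Kirchhoff and continuity conditions survive the $C^1$ convergence, and $\med v=0$ is immediate. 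Estimate~\eqref{eq:bound_3} follows since the Hölder seminorm is lower semicontinuous under uniform convergence of the second derivatives, and moreover $\l\vl\to\r$ uniformly on $\G$.

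It remains to prove uniqueness of the ergodic couple. Let $(v_1,\r_1)$, $(v_2,\r_2)$ be two classical solutions of~\eqref{eq:HJ}. Evaluating the equation at a point where $z:=v_1-v_2$ attains its maximum over $\G$ — either in the interior of an edge, where $\partial z=0$ and $\partial^2 z\le 0$, or at a vertex, where continuity of $z$ and the two Kirchhoff relations (each term having a fixed sign, since $\g_{i\a}\mu_\a>0$ and $z_\a$ has a max at that endpoint) force all adjacent outward derivatives of $z$ to vanish — yields $\r_1\le\r_2$, and by symmetry $\r_1=\r_2=:\r$. Then, using that $H(x,\cdot)$ is globally Lipschitz in $p$ with constant $K$ by~\eqref{eq:hamiltonian} and (H2), $z$ satisfies on each edge a linear inequality $-\mu_\a\partial^2 z+c_\a(x)\partial z\le 0$ with $\|c_\a\|_\infty\le K$, together with the Kirchhoff and continuity conditions; the strong maximum principle on the edges together with a Hopf-type argument at the junctions forces $z$ to be constant on $\G$, hence $z\equiv 0$ by $\med z=0$. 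Finally, since the limit couple obtained above is thus unique, every subsequence of $\{(\l\vl,\wl)\}_\l$ has a further subsequence converging to $(\r,v)$, so the whole family converges as $\l\to 0^+$. The main obstacle is the $\l$-uniform Lipschitz (and then $C^{2,\th}$) estimate of the second step: the Kirchhoff transmission condition couples the edges in a way invisible to one-dimensional arguments, and must be used carefully to prevent the Bernstein auxiliary function — and the Schauder bootstrap — from degenerating at the vertices; the remaining steps are standard compactness and maximum-principle arguments.
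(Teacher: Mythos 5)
The paper never actually proves this proposition: it is stated with pointers to \cite[Theorem II.2]{al}, \cite{cms} and \cite[Proposition 3.2 and Theorem 3.7]{adlt1}, and your sketch reconstructs essentially the route those references follow --- comparison with the constants $\pm K/\l$ for \eqref{eq:bound_1}, a $\l$-uniform Bernstein gradient bound, a bootstrap for \eqref{eq:bound_2}, compactness plus the strong maximum principle of Lemma~\ref{lem:max_princ} for the limit $\l\to0^+$ and for uniqueness of $v$. So at the level of strategy there is nothing to contrast, and the genuinely hard step, the gradient bound uniform in $\l$, is delegated to the literature by you exactly as it is by the authors.

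One written step would not survive scrutiny as it stands: in the proof that $\r_1=\r_2$ you ``evaluate the equation'' at a maximum point of $z=v_1-v_2$, but if that maximum sits at a vertex the PDE does not hold there --- only the Kirchhoff relation does --- and the conclusion that all outward derivatives of $z$ vanish does not by itself yield $\r_1\le\r_2$. Two standard repairs: (a) if $\r_1>\r_2$ then $z$ satisfies $-\m_\a\partial^2 z+c\,\partial z=\r_2-\r_1<0$ on every edge with $|c|\le K$, so the maximum of $z$ cannot be attained in the interior of an edge, while at a vertex maximum Hopf's lemma (applicable on each adjacent edge, since local constancy of $z$ would contradict the strict inequality) gives $\partial_\a z(\nu_i)>0$ for every $\a\in\mA_i$, contradicting $\sum_{\a\in\mA_i}\g_{i\a}\m_\a\partial_\a z(\nu_i)=0$; or (b), cheaper, observe that any ergodic constant is forced to equal $\lim_{\l\to0^+}\l\vl$: if $(v,\r)$ solves \eqref{eq:HJ}, then $v+\r/\l\pm\|v\|_{L^\infty(\G)}$ are a super- and a subsolution of \eqref{eq:HJ_discount}, so Lemma~\ref{lem:comp_prin} gives $\|\l\vl-\r\|_{L^\infty(\G)}\le 2\l\|v\|_{L^\infty(\G)}\to0$. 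Either repair is routine; with it, the rest of your argument (the reduction of $v_1-v_2$ to the linear problem of Lemma~\ref{lem:max_princ} once $\r_1=\r_2$, and the subsequence argument upgrading convergence along subsequences to convergence of the whole family) is sound.
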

We give  some preliminary results for equations \eqref{eq:HJ} and \eqref{eq:HJ_discount}. The first result is a    strong maximum principle  for the linear HJ equation (see \cite[Lemma 2.8]{adlt1} or \cite[Theorem 3.1]{cms}).
\begin{lemma}\label{lem:max_princ}
For $g\in PC\left(\Gamma\right)$, the solutions of 
\begin{equation*}
\begin{cases}
-\mu_{\alpha}\partial^{2}v+g\partial v=0, & \text{in }\Gamma_{\alpha}\backslash\mV,\alpha\in\mA,\\
{\displaystyle \sum_{\alpha\in\mA_{i}}\gamma_{i\alpha}\mu_{\alpha}\partial_{\alpha}v(\nu_i)=0,} & i\in I,\\
v|_{\Gamma_\alpha} (\nu_i) =v|_{\Gamma_\beta} (\nu_i), \quad &\alpha,\beta \in \cA_i, i\in I
\end{cases} 
\end{equation*}	
are the constant functions on $\Gamma$.
\end{lemma}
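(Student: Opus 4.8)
The plan is to prove this by a strong-maximum-principle argument that first analyses the equation edge by edge and then propagates information through the network via the Kirchhoff condition. On each edge the equation $-\mu_\alpha\partial^2 v_\alpha+g_\alpha\partial v_\alpha=0$ is a linear second order ODE with continuous coefficient $g_\alpha$ (recall $g\in PC(\Gamma)$), and a classical solution has $v_\alpha\in C^2([0,\ell_\alpha])$. Setting $w_\alpha:=\partial v_\alpha$, the equation reads $\mu_\alpha w_\alpha'=g_\alpha w_\alpha$, so the integrating factor gives $w_\alpha(y)=w_\alpha(0)\exp\big(\mu_\alpha^{-1}\int_0^y g_\alpha(s)\,ds\big)$. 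Hence on each edge $\partial v_\alpha$ either vanishes identically (so $v_\alpha$ is constant) or never vanishes (so $v_\alpha$ is strictly monotone); in particular a non-constant $v_\alpha$ has no interior extremum.

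Next I would set $M:=\max_\Gamma v$, which exists by compactness of $\Gamma$ and continuity of $v$, and observe that $M$ is attained at a vertex: on any edge where $v$ is non-constant it is strictly monotone and attains its maximum at an endpoint, while on a constant edge the common value is that of the endpoints. Put $S:=\{i\in I:\ v(\nu_i)=M\}$, which is nonempty. The heart of the proof is to show that $S$ is closed under adjacency. Fix $i\in S$ and $\alpha\in\mA_i$; since $v_\alpha\le M=v(\nu_i)$, the vertex $\nu_i$ is a maximum point of $v_\alpha$ on $[0,\ell_\alpha]$. If $v_\alpha$ is constant then $\partial_\alpha v(\nu_i)=0$; if not, $v_\alpha$ is strictly monotone with $v_\alpha'$ of constant (nonzero) sign, and checking the two cases $\nu_i=\pi_\alpha(0)$ and $\nu_i=\pi_\alpha(\ell_\alpha)$ against the definition of the outward derivative $\partial_\alpha v(\nu_i)=n_{i\alpha}\,\partial v_\alpha(\pi_\alpha^{-1}(\nu_i))$ yields the strict inequality $\partial_\alpha v(\nu_i)>0$.

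Therefore every summand in the Kirchhoff relation $\sum_{\alpha\in\mA_i}\gamma_{i\alpha}\mu_\alpha\,\partial_\alpha v(\nu_i)=0$ is nonnegative; since $\gamma_{i\alpha},\mu_\alpha>0$ (by (H1)) and the sum is zero, each term vanishes, i.e. $\partial_\alpha v(\nu_i)=0$ for every $\alpha\in\mA_i$, which by the dichotomy forces $v_\alpha\equiv M$ on each edge incident to $\nu_i$. By the continuity condition at the vertices, every vertex adjacent to $\nu_i$ then lies in $S$. Since $\Gamma$ is connected, $S=I$ and every edge is incident to a vertex of $S$, whence $v\equiv M$ on all of $\Gamma$; conversely any constant trivially solves the system, which completes the proof.

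The step I expect to require the most care is the vertex bookkeeping in the third paragraph: one must correctly match the sign of the one-sided derivative of $v_\alpha$ at the endpoint $\pi_\alpha^{-1}(\nu_i)$ with the orientation constant $n_{i\alpha}$ so that a maximum of $v_\alpha$ at $\nu_i$ indeed produces $\partial_\alpha v(\nu_i)\ge 0$, with strict sign exactly when $v_\alpha$ is non-constant; everything else is routine. Alternatively one could simply invoke the strong maximum principle for Kirchhoff problems established in \cite{adlt1,cms}, but the elementary edgewise argument above is self-contained.
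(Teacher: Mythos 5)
Your argument is correct, but it is worth noting that the paper does not actually prove this lemma: it simply cites \cite[Lemma 2.8]{adlt1} and \cite[Theorem 3.1]{cms}, so your elementary, self-contained proof is a genuine alternative to an appeal to the literature. The route you take is the natural one for this one-dimensional setting and all the key steps check out: since $g_\alpha\in C([0,\ell_\alpha])$, the substitution $w_\alpha=\partial v_\alpha$ turns the edge equation into the first-order linear ODE $\mu_\alpha w_\alpha'=g_\alpha w_\alpha$, whose solutions either vanish identically or never vanish, giving the constant/strictly-monotone dichotomy; the sign bookkeeping at the vertices is also right (with the paper's convention, $\partial_\alpha v(\nu_i)=-v_\alpha'(0)$ at $\nu_i=\pi_\alpha(0)$ and $=v_\alpha'(\ell_\alpha)$ at the other endpoint, so a global maximum at $\nu_i$ yields $\partial_\alpha v(\nu_i)\ge 0$, strictly on a non-constant edge), and positivity of $\gamma_{i\alpha}\mu_\alpha$ then kills every term in the Kirchhoff sum. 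Two minor points deserve a word in a polished write-up: (i) the propagation step uses connectedness of $\Gamma$, which the paper never states explicitly but which is implicit in the conclusion of the lemma (otherwise only local constancy on each component follows); (ii) the ODE manipulation requires $v_\alpha\in C^2([0,\ell_\alpha])$, i.e.\ regularity up to the closed edge, which is exactly what membership in $C^2(\Gamma)$ provides, so it is worth saying that the solutions under consideration are classical ones in that class. What your approach buys is transparency and independence from the cited references; what the citation buys is brevity and coverage of more general transmission conditions treated in \cite{adlt1,cms}.
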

The second result is a comparison principle for \eqref{eq:HJ_discount} (see \cite[Lemma 3.6]{adlt1} and \cite[Corollary 3.1]{cms}).
\begin{lemma}\label{lem:comp_prin} If $u,v\in C^{2}\left(\Gamma\right)$ satisfy
\begin{equation*}
\begin{cases}
-\mu_{\alpha}\partial^{2}v+H\left(x,\partial v\right)+\lambda v\ge-\mu_{\alpha}\partial^{2}u+H\left(x,\partial u\right)+\lambda u, & \text{if }x\in\Gamma_{\alpha}\backslash\mV,\alpha\in A,\\[4pt]
{\displaystyle \sum_{\alpha\in\mA_{i}}\gamma_{i\alpha}\mu_{\alpha}\partial_{\alpha}v(\nu_i)\ge\sum_{\alpha\in\mA_{i}}\gamma_{i\alpha}\mu_{\alpha}\partial_{\alpha}u(\nu_i)}, & \text{if }\nu_{i}\in\mV,
\end{cases}
\end{equation*}
then $v\ge u$.
\end{lemma}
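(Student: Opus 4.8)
The statement is a standard comparison principle, and the plan is to run a maximum-principle argument, the only network-specific point being the behaviour at the vertices. Since $u,v\in C^{2}(\G)\subset C(\G)$, the function $w:=v-u$ is continuous on the compact set $\G$, so $m:=\min_{\G}w$ is attained, say at $x_{0}$. Assume for contradiction that $m<0$. If $x_{0}\in\G_{\alpha}\setminus\mV$ is interior to an edge, then $\partial w(x_{0})=0$ and $\partial^{2}w(x_{0})\ge 0$; subtracting the two differential inequalities at $x_{0}$ and using $\partial v(x_{0})=\partial u(x_{0})$ (which makes the Hamiltonian terms cancel) gives
\[
0\le -\mu_{\alpha}\bigl(\partial^{2}v(x_{0})-\partial^{2}u(x_{0})\bigr)+\lambda\,w(x_{0})\le \lambda m<0,
\]
a contradiction. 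Hence the minimum, if $m<0$, can be attained only at a vertex.

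To handle that case I would first rewrite the edgewise inequality in linear form. By compactness of $A$ and continuity of $b_{\alpha},f_{\alpha}$, for each $y$ pick a maximiser $a^{*}(y)$ of $a\mapsto -b_{\alpha}(\pi_{\alpha}(y),a)\,\partial u_{\alpha}(y)-f_{\alpha}(\pi_{\alpha}(y),a)$, and set $c_{\alpha}(y):=-b_{\alpha}(\pi_{\alpha}(y),a^{*}(y))$, so that $\|c_{\alpha}\|_{L^{\infty}}\le K$ by (H2). The sup-representation \eqref{eq:hamiltonian} then yields $H_\alpha(\pi_\alpha(y),\partial v_\alpha(y))-H_\alpha(\pi_\alpha(y),\partial u_\alpha(y))\ge c_{\alpha}(y)\,\partial w_{\alpha}(y)$, and therefore
\[
-\mu_{\alpha}\partial^{2}w_{\alpha}+c_{\alpha}\,\partial w_{\alpha}+\lambda\,w_{\alpha}\ge 0\qquad\text{in }(0,\ell_{\alpha}),
\]
i.e. $w_{\alpha}$ is a supersolution of a uniformly elliptic linear operator with bounded coefficients on each edge.

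Now let $x_{0}=\nu_{i}\in\mV$ and fix $\alpha\in\mA_{i}$. If $w_{\alpha}$ takes the value $m$ at some point of $(0,\ell_{\alpha})$ arbitrarily close to the endpoint $\pi_{\alpha}^{-1}(\nu_{i})$ (in particular, if $w_{\alpha}\equiv m$ near that endpoint), then $w$ attains its minimum at an interior point of $\G_{\alpha}$ and we are back to the previous case, a contradiction. Otherwise $w_{\alpha}>m$ on a one-sided punctured neighbourhood of $\pi_{\alpha}^{-1}(\nu_{i})$, so $\nu_{i}$ is a strict minimum of $w_{\alpha}$ from inside the edge; applying the Hopf boundary-point lemma to the linear inequality above (equivalently, to $\mu_{\alpha}\partial^{2}(-w_{\alpha})-c_{\alpha}\partial(-w_{\alpha})-\lambda(-w_{\alpha})\ge 0$ at the strict positive maximum $-m>0$ of $-w_{\alpha}$, whose zeroth-order coefficient $-\lambda\le 0$ is admissible) gives a strictly signed outward derivative, namely $\partial_{\alpha}w(\nu_{i})<0$. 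Since this holds for every $\alpha\in\mA_{i}$ and $\gamma_{i\alpha}\mu_{\alpha}>0$ by (H1),
\[
\sum_{\alpha\in\mA_{i}}\gamma_{i\alpha}\mu_{\alpha}\,\partial_{\alpha}v(\nu_{i})-\sum_{\alpha\in\mA_{i}}\gamma_{i\alpha}\mu_{\alpha}\,\partial_{\alpha}u(\nu_{i})=\sum_{\alpha\in\mA_{i}}\gamma_{i\alpha}\mu_{\alpha}\,\partial_{\alpha}w(\nu_{i})<0,
\]
contradicting the Kirchhoff inequality assumed at $\nu_{i}$. Thus $m\ge 0$, i.e. $v\ge u$.

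The genuinely delicate step is the vertex analysis: one must orient the outward derivatives consistently with the definition given in Section~\ref{sect:definitions}, verify that the Hopf lemma is applicable to $-\mu_{\alpha}\partial^{2}+c_{\alpha}\partial+\lambda$ at a negative minimum despite the ``wrong-sign'' zeroth-order term (the minimum being negative is exactly what makes the maximum of $-w_{\alpha}$ positive, hence admissible), and dispose of the borderline case in which $w_{\alpha}$ is locally constant by folding it back into the interior argument. The rest — the cancellation of the Hamiltonian terms at an interior minimum and the reduction to a linear inequality via \eqref{eq:hamiltonian} and (H2) — is routine.
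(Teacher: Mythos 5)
The paper does not prove this lemma itself --- it is quoted from \cite{adlt1} and \cite{cms} --- so there is no internal argument to compare yours with; your maximum-principle strategy is the standard one, and its overall architecture (interior minimum with cancellation of the Hamiltonian terms, linearization of the convex Hamiltonian via an optimal control, Hopf boundary-point lemma at a vertex played against the Kirchhoff inequality, and the disposal of the degenerate case where $w_\a$ attains the minimum inside an edge) is sound.

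There is, however, one genuine error in the linearization step, precisely where the inequality direction matters. You choose $a^*(y)$ to maximize $a\mapsto -b_\a(\pi_\a(y),a)\,\partial u_\a(y)-f_\a(\pi_\a(y),a)$, which yields the \emph{lower} bound $H_\a(\cdot,\partial v_\a)-H_\a(\cdot,\partial u_\a)\ge c_\a\,\partial w_\a$. But the hypothesis gives $-\mu_\a\partial^2 w+\bigl[H(\cdot,\partial v)-H(\cdot,\partial u)\bigr]+\lambda w\ge 0$, and to deduce from it that $-\mu_\a\partial^2 w_\a+c_\a\partial w_\a+\lambda w_\a\ge 0$ you need an \emph{upper} bound $H_\a(\cdot,\partial v_\a)-H_\a(\cdot,\partial u_\a)\le c_\a\,\partial w_\a$; with your lower bound the ``therefore'' is a non sequitur, since replacing the Hamiltonian difference by something smaller can destroy the sign of the supersolution inequality. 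The fix is immediate: take $a^*(y)$ optimal for $\partial v_\a(y)$ instead, so that $H_\a(\cdot,\partial v_\a)=-b_\a(\cdot,a^*)\partial v_\a-f_\a(\cdot,a^*)$ while $H_\a(\cdot,\partial u_\a)\ge -b_\a(\cdot,a^*)\partial u_\a-f_\a(\cdot,a^*)$, which gives the required upper bound with the same control $|c_\a|\le K$ from (H2). With that correction the rest of your argument goes through: the zeroth-order term is handled correctly (near the vertex $w<0$, so $-\mu_\a\partial^2 w+c_\a\partial w\ge -\lambda w>0$ and the classical Hopf lemma applies there), the sign of the outward derivative matches the paper's convention for $\partial_\a$ at a vertex, and the barrier argument behind the Hopf lemma is pointwise in one dimension and only uses $|c_\a|\le K$, so the measurability of the selection $a^*$ is not an issue.
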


We now give a continuous dependence estimate for the solution of \eqref{eq:HJ} and \eqref{eq:HJ_discount} with respect to the data of the problem.
\begin{theorem}\label{thm:dip_continua}
For $i=1,2$, consider $H^i:\G\times\R\to\R$ and $F^i:\Gamma\rightarrow \R$ such that $H^{i}_\a(x,p)=\sup_{a\in A}\left\{ - b^{i}_\a(x,a)p-f^{i}_\a(x,a)\right\}$ for $x\in\Gamma_\a$, $\a\in \mA$.
Assume that
\begin{itemize}
\item[(i)]  $b^i_\a$, $f^i_\a$, $i=1,2$, satisfy (H2) with the same constants $K$, $L$;
\item[(ii)]  $\|H^i_\a\|_{C^{1,\t}(\G_\a\times (-\bar K,\bar K))}\le K_H$ for $\a \in\mA$, $i=1,2$, where $\bar K$ as in \eqref{eq:bound_2} and $\t\in (0,1]$;
\item[(iii)] for some $\theta\in(0,1]$, the functions $F^i:\Gamma\rightarrow \R$, $i=1,2$, fulfill
\begin{equation*}
\|F^i_\a\|_{L^\infty(\Gamma_\a)}\leq K_F\quad\textrm{and}\quad [F^i_\a]_{\theta,\Gamma_\a}\leq L_F \qquad \forall \a \in\mA.
\end{equation*}
\end{itemize}
For $i=1,2$, let $\vl_i$ be the solution of \eqref{eq:HJ_discount} with Hamiltonian $H(x,p)=H^i(x,p)+F^i(x)$ and set $\wl_i:=\vl_i-\med{{\vl_i}}$. Then, there exists a positive constant $C_0$, independent of $\l$, such that
\begin{equation} \label{eq:dc_0}
\begin{split}
\|\wl_1-\wl_2\|_{C^2(\G)}&\le  C_0\max_{\a\in\mA}\Big( \max_{x,a}|b^1_\a(x,a)-b^2_\a(x,a)|\\
&+\max_{x,a}|f^1_\a(x,a)-f^2_\a(x,a)|+\max_{x}|F^1_\a(x)-F^2_\a(x)|\big)+\\
&\max_{\a\in\mA}[H^1_\a-H^2_\a]_{1,\G_\a\times (-\bar K,\bar K)}+\max_{\a\in\mA}[F^1_\a-F^2_\a]_{\theta,\Gamma_\a}.
\end{split}
\end{equation}
Estimate \eqref{eq:dc_0} also holds for $v_i$, $i=1,2$,  solution to \eqref{eq:HJ} corresponding to $H(x,p)=H^i(x,p)+F^i(x)$.
\end{theorem}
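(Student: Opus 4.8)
The strategy is to fix $\l \in (0,1)$, set $w := \wl_1 - \wl_2$, and derive from the two discounted equations \eqref{eq:HJ_discount} an equation for $w$ with a small right-hand side, then use the linear theory (Lemma~\ref{lem:max_princ} and Lemma~\ref{lem:comp_prin}) together with the uniform bounds of Proposition~\ref{lemma:bounds_discount_ergodic} to estimate $\|w\|_{C^2(\G)}$ independently of $\l$. Writing $G^i(x,p) := H^i(x,p)+F^i(x)$, the key algebraic step is that, subtracting the equations for $\vl_1$ and $\vl_2$,
\begin{equation*}
-\mu_\a \partial^2 w + G^1(x,\partial \vl_1) - G^1(x,\partial \vl_2) + \l w = -\big(G^1(x,\partial \vl_2) - G^2(x,\partial \vl_2)\big) + \l\big(\med{\vl_2}-\med{\vl_1}\big),
\end{equation*}
and by the mean value theorem $G^1(x,\partial \vl_1) - G^1(x,\partial \vl_2) = \mathbf{b}(x)\cdot \partial w$ for a bounded measurable coefficient $\mathbf b$ (using the $C^1$ bound (ii) on $H^1$ and that $\partial \vl_i$ takes values in $(-\bar K,\bar K)$ by \eqref{eq:bound_2}). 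This turns the problem into a linear Kirchhoff problem for $w$ with bounded drift and a right-hand side controlled by $\max_\a(\|b^1_\a-b^2_\a\|_\infty + \|f^1_\a-f^2_\a\|_\infty + \|F^1_\a-F^2_\a\|_\infty)$ plus the term $\l(\med{\vl_2}-\med{\vl_1})$. Note the Kirchhoff and continuity conditions for $w$ are homogeneous since both $\wl_i$ satisfy the same ones.

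The first substantive step is an $L^\infty$ bound on $w$. Since $\med{w}=0$, by the strong maximum principle (Lemma~\ref{lem:max_princ}) $w$ is not constant unless it vanishes, and one compares $w$ against suitably constructed sub/supersolutions; alternatively, one integrates the $w$-equation against a test function or uses the comparison principle Lemma~\ref{lem:comp_prin} applied to $\vl_1$ and $\vl_2 \pm \delta$-type perturbations after absorbing the discrepancy of the Hamiltonians into a constant. The quantity $\l(\med{\vl_2}-\med{\vl_1})$ needs care: from \eqref{eq:bound_1} one has $\|\l \vl_i\|_\infty\le K$, but that only gives $O(1)$, not smallness. The resolution — this is the point to handle carefully — is that $\l|\med{\vl_1}-\med{\vl_2}|$ is itself $O(\text{data distance})$: applying Lemma~\ref{lem:comp_prin} with the roles of $\vl_1,\vl_2$ and using that the Hamiltonians differ by at most $\delta_0 := \max_\a(\|b^1_\a-b^2_\a\|_\infty K + \|f^1_\a-f^2_\a\|_\infty + \|F^1_\a-F^2_\a\|_\infty)$ in sup norm on the relevant gradient range yields $\|\vl_1 - \vl_2\|_\infty \le \delta_0/\l$ (a standard discounted comparison), hence $\l\|\vl_1-\vl_2\|_\infty\le \delta_0$ and in particular $\l|\med{\vl_1}-\med{\vl_2}|\le \delta_0$. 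Feeding this back, the right-hand side of the $w$-equation is $O(\delta_0)$ in $L^\infty$, and the homogeneous-Kirchhoff maximum principle gives $\|w\|_{L^\infty(\G)}\le C\delta_0$.

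Once $\|w\|_{L^\infty}$ is controlled, the $C^2$ (indeed $C^{2,\theta'}$) estimate follows from interior and up-to-the-vertex Schauder-type estimates for the linear Kirchhoff problem satisfied by $w$: one has $-\mu_\a \partial^2 w + \mathbf b(x)\cdot\partial w = \Phi$ on each edge with homogeneous Kirchhoff/continuity conditions, where $\|\Phi_\a\|_{L^\infty}\le C\delta_0 + \l\|w\|_\infty$ and the Hölder seminorm of $\Phi$ is bounded by $C(\delta_0 + \max_\a[H^1_\a-H^2_\a]_{1,\G_\a\times(-\bar K,\bar K)} + \max_\a[F^1_\a-F^2_\a]_{\theta,\G_\a})$, using the $C^{1,\theta}$ bound on $\vl_i$ from \eqref{eq:bound_2} to estimate the Hölder norm of $x\mapsto G^1(x,\partial\vl_1)-G^2(x,\partial\vl_2)$ — this is where the two seminorm terms in \eqref{eq:dc_0} enter. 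The relevant Schauder estimate on networks is exactly the regularity theory used in Proposition~\ref{lemma:bounds_discount_ergodic} (from \cite{al,cms,adlt1}), and its constant is $\l$-independent because $\l w$ can be moved to the right-hand side. Combining, $\|w\|_{C^{2,\theta'}(\G)}\le C_0\big(\delta_0 + \max_\a[H^1_\a-H^2_\a]_{1,\G_\a\times(-\bar K,\bar K)} + \max_\a[F^1_\a-F^2_\a]_{\theta,\G_\a}\big)$, which is \eqref{eq:dc_0}. Finally, passing to the limit $\l\to 0^+$ using Proposition~\ref{lemma:bounds_discount_ergodic}(ii) — $\wl_i \to v_i$ in $C^2(\G)$ along a subsequence, by compactness from the uniform $C^{2,\theta}$ bound — transfers the estimate to the solutions $v_i$ of the ergodic problem \eqref{eq:HJ}.

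**Main obstacle.** The delicate point is obtaining smallness (rather than mere boundedness) of the term $\l(\med{\vl_1}-\med{\vl_2})$ appearing after normalization; the fix is the discounted comparison estimate $\l\|\vl_1-\vl_2\|_{L^\infty}\le \delta_0$ noted above, which must be established carefully and uniformly in $\l$ before the Schauder step can be closed. A secondary technical care point is making the Schauder estimate genuinely $\l$-independent and valid up to the vertices under the Kirchhoff condition — but this is available from the cited network regularity results.
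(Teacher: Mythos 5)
Your overall architecture is genuinely different from the paper's: you linearize directly ($w=\wl_1-\wl_2$ solves a linear Kirchhoff problem with drift $\mathbf b$ and source $\Phi$) and try to close the estimate with a maximum principle plus Schauder bounds, whereas the paper argues by contradiction and compactness: it normalizes $W^k=c_k^{-1}(w^{\l_k}_1-w^{\l_k}_2)$ along a putative sequence violating \eqref{eq:dc_0}, passes to the limit in the linearized equation using the uniform $C^{2,\theta}$ bounds of Proposition~\ref{lemma:bounds_discount_ergodic} together with uniform H\"older bounds on the drift $g^k$ and the remainder $R^k$, and concludes from the strong maximum principle (Lemma~\ref{lem:max_princ}) that the limit $W$ is a constant of zero mean, contradicting $\|W^k\|_{C^2(\G)}=1$. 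Several of your intermediate computations do coincide with the paper's: the mean-value linearization of $H^1$, the identification of the two seminorm terms entering through the H\"older norm of the source, and above all the discounted comparison estimate $\l\|\vl_1-\vl_2\|_{L^\infty}\le\delta_0$ (the paper's \eqref{eq:dc_1}), which you correctly single out as the way to control $\l(\med{\vl_1}-\med{\vl_2})$.

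However, there is a genuine gap at the step you state as ``the homogeneous-Kirchhoff maximum principle gives $\|w\|_{L^\infty(\G)}\le C\delta_0$.'' The equation for $w$ is $-\mu_\a\partial^2 w+\mathbf b\,\partial w+\l w=\Phi$ with $\med{w}=0$; the comparison principle (Lemma~\ref{lem:comp_prin}) only yields $\|w\|_{L^\infty}\le \l^{-1}\|\Phi\|_{L^\infty}$, which degenerates as $\l\to0^+$, while the strong maximum principle (Lemma~\ref{lem:max_princ}) is purely qualitative: it says solutions of the homogeneous problem are constants, but carries no quantitative resolvent bound. What your argument actually requires is an estimate $\|w\|_{L^\infty}\le C\|\Phi\|_{L^\infty}$ for mean-zero solutions, with $C$ uniform over all drifts $\mathbf b$ bounded by $K_H$ and over $\l\in(0,1)$ --- that is, a quantitative ergodicity (spectral-gap or Harnack-type) estimate for the linearized Kirchhoff operator. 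This is precisely what the paper's compactness argument supplies, non-constructively, and none of the lemmas available in the paper gives it directly. Your proof could be repaired either by establishing such a uniform resolvent bound (for instance via a Harnack inequality up to the vertices, which is plausible on a network but would have to be proved), or by replacing your first substantive step with the paper's normalization-and-contradiction scheme; as written, the chain of estimates does not close.
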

\begin{proof} 
We shall proceed by contradiction. We assume that, for $k\to +\infty$, there exist sequences $\l_k\to 0$ , $b^{i,k}_\a$, $f^{i,k}_\a$, $F^{i,k}_\a$, $i=1,2$, satisfying (H2) and $(iii)$ with the same constants $K$, $L$, $\theta$, $K_F$ and $L_F$ and $v^{\l_k}_i$, $i=1,2$, solution to \eqref{eq:HJ_discount} with discount $\l_k$ and coefficients $b^{i,k}_\a$, $f^{i,k}_\a$ and $F^{i,k}_\a$ such that   
\begin{align*}
c_k:=&\|w^{\l_k}_1-w^{\l_k}_2\|_{C^2(\G)}\\
  \ge&  k\max_{\a\in\mA}\Big( \max_{x,a}|b^{1,k}_\a(x,a)-b^{2,k}_\a(x,a)|\\ 
&+\max_{x,a}|f^{1,k}_\a(x,a)-f^{2,k}_\a(x,a)|+\max_{x}|F^{1,k}_\a(x)-F^{2,k}_\a(x)|\Big)\\
&+\max_{\a\in\mA}[H^{1,k}_\a-H^{2,k}]_{1,\G_\a\times (-\bar K,\bar K)}+\max_{\a\in\mA}[F^{1,k}_\a-F^{2,k}_\a]_{\theta,\Gamma_\a}
\end{align*}
where  $w^{\l_k}_i=v^{\l_k}_i-\med{v^{\l_k}_i}$ and 
\[H^{i,k}_\a(x,p)=\sup_{a\in A}\left\{ - b^{i,k}_\a(x,a)p-f^{i,k}_\a(x,a)\right\},\qquad x\in\Gamma_\a,\, i=1,2.\]
The function $w^{\l_k}_i=v^{\l_k}_i-\med{v^{\l_k}_i}$, $i=1,2$, solves the equation
\begin{equation*} 
\begin{cases}
-\mu_{\alpha}\partial^{2}w^{\l_k}_i+H^{i,k}\big(x,\partial w^{\l_k}_i\big)+F^{i,k}_\a(x)+\l_k w^{\l_k}_i+\l_k\med{v^{\l_k}_i}=0, & x\in\left(\Gamma_{\alpha}\backslash\mV\right),\alpha\in\mA,\\[3pt]
{ \sum_{\alpha\in\mA_{i}}\gamma_{i\alpha}\mu_{\alpha}\partial_{\alpha} w^{\l_k}_i(\nu_i)=0,} 
& \nu_{i}\in\mV,\\[3pt]
w^{\l_k}_i|_{\Gamma_{\alpha}}(\nu_i)=w^{\l_k}_i|_{\Gamma_{\beta}}(\nu_i), & \alpha,\beta\in\mA_{i},\nu_{i}\in\mV.
\end{cases} 
\end{equation*}
Hence the function $W^k=c_k^{-1}(w^{\l_k}_1-w^{\l_k}_2)$ solves the equation
\begin{equation}\label{eq:diff_w1}
\left\{
\begin{array}{ll}
-\m_\a\partial^2 W^k+c_k^{-1}\big(H^{1,k}(x,\partial w^{\l_k}_{1} )\\
\qquad \qquad-H^{1,k}(x,\partial w^{\l_k}_{2} )\big)
+R^k=0, & x\in\left(\Gamma_{\alpha}\backslash\mV\right),\alpha\in\mA,\\[3pt]
{\sum_{\alpha\in\mA_{i}}\gamma_{i\alpha}\mu_{\alpha}\partial_{\alpha} W^k(\nu_i)=0,} 
& \nu_{i}\in\mV,\\[3pt]
W^K|_{\Gamma_{\alpha}}(\nu_i)=W^k|_{\Gamma_{\beta}}(\nu_i), & \alpha,\beta\in\mA_{i},\nu_{i}\in\mV,
\end{array}
\right.
\end{equation}
 where
\begin{multline*}
R^k=\l_kc_k^{-1}(\med{v^{\l_k}_1} -\med{v^{\l_k}_2})+c_k^{-1}\big(H^{1,k}(x,\partial w^{\l_k}_{2} )-H^{2,k}(x,\partial w^{\l_k}_{2} ))\\
\l_kW^k+c_k^{-1}\left(F^{1,k}(x)-F^{2,k}(x)\right).
\end{multline*}
Since $H^{i,k}_\a$ belongs to $C^1(\G_\a\times \R)$ for all $\a\in\mA$, we rewrite \eqref{eq:diff_w1} as
\begin{equation}\label{eq:diff_w2}
\left\{
\begin{array}{ll}
-\m_\a\partial^2 W^k+g^k\partial W^k+R^k=0, & x\in\left(\Gamma_{\alpha}\backslash\mV\right),\alpha\in\mA,\\[3pt]
{ \sum_{\alpha\in\mA_{i}}\gamma_{i\alpha}\mu_{\alpha}\partial_{\alpha} W^k(\nu_i)=0,} 
& \nu_{i}\in\mV,\\[3pt]
W^k|_{\Gamma_{\alpha}}(\nu_i)=W^k|_{\Gamma_{\beta}}(\nu_i), & \alpha,\beta\in\mA_{i},\nu_{i}\in\mV,
\end{array}
\right.
\end{equation}
where 
\[g^k_\a(x)=\int_0^1\partial_p H^{1,k}_\a\left(x,t \partial w^{\l_k}_{1,\a}+(1-t)\partial w^{\l_k}_{2,\a}\right)dt\]
and we aim to pass to the limit in \eqref{eq:diff_w2} for $k\to \infty$.\\
We first observe that, since $w^{\l_k}_i\in C^{2,\theta}(\G)$ with $\|w^{\l_k}_i\|_{C^{2,\theta}}\leq \bar K$ and $H^{i,k}_\a\in C^{1,\t}(\G_\a\times [-\bar K,\bar K])$, then the functions $g^k_\a$, $\a\in\mA$, are uniformly bounded and  H\"older continuous of exponent $\t$. Hence, there exists $g:\G\to\R$ such that for any $\a\in\mA$
\begin{equation}\label{eq:conv_g}
g^k_\a\to g_\a \qquad \text{for $k\to \infty$, uniformly in $\G_\a$}.
\end{equation} 
Moreover, we claim that the function $R^k$ is uniformly $\theta$-H\"older continuous. Indeed, by \eqref{eq:bound_2} and the definition of $c_k$, we have
\begin{align*}
&c_k^{-1}\left[H^{1,k}_\a(x,\partial w^{\l_k}_{2} )-H^{2,k}_\a(x,\partial w^{\l_k}_{2} )\right]_{1,\G_\a}\\
& \le\left(1+\|w^{\l_k}_2\|_{C^{2,\th}(\G)}\right)\, c_k^{-1} [H^1-H^2]_{1,\G_\a\times (-\bar K,\bar K)} 
\le 1+\|w^{\l_k}_2\|_{C^{2,\th}(\G)}\le 1+\bar K.
\end{align*}
On the other hand, by our choice of $c_k$, we have
\begin{equation*}
\left[c_k^{-1}\left(F^{1,k}_\a-F^{2,k}_\a\right)\right]_{\theta,\Gamma_\a}=c_k^{-1}\left[F^{1,k}_\a-F^{2,k}_\a\right]_{\theta,\Gamma_\a}\leq 1;
\end{equation*}
hence, our claim is proved.\\
We now claim that
\begin{equation}\label{eq:stima1_R}
\|R^k\|_{L^\infty(\G)}=o_k(1)\qquad \text{as $k\to\infty$}
\end{equation}
where $\lim_{k\to \infty}o_k(1)=0$, uniformly in $x$ and may change from line to line. 
Indeed, we have
\begin{equation}\label{stimaR_1}
\l_kW^k=\l_k\frac{w^{\l_k}_{1}-w^{\l_k}_{2}}{\|w^{\l_k}_{1}-w^{\l_k}_{2}\|_{C^2(\G)}}=o_k(1).
\end{equation}
Moreover, we have
\begin{multline}\label{eq:dc_1}
\l_k\|v^{\l_k}_1-v^{\l_k}_2\|_{L^\infty}\le  \max_{\a\in\mA} \big(\bar K \max_{x,a}|b^{k,1}_\a(x,a)-b^{k,2}_\a(x,a)|+\\\max_{x,a}|f^{k,1}_\a(x,a)-f^{k,2}_\a(x,a)|+\max_{x}|F^{1,k}_\a(x)-F^{2,k}_\a(x)|\big),
\end{multline}
where $\bar K$ as in \eqref{eq:bound_2}. Indeed, to prove \eqref{eq:dc_1}, it is sufficient to observe that 
\begin{align*}
v_{\pm}(x)&= v^{\l_k}_2(x)\pm \l_k^{-1}\max_{\a\in\mA}\big( \bar K\max_{x,a}|b^{k,1}_\a(x,a)-b^{k,2}_\a(x,a)|\\
&+\max_{x,a}|f^{k,1}_\a(x,a)-f^{k,2}_\a(x,a)|+\max_{x}|F^{1,k}_\a(x)-F^{2,k}_\a(x)|\big) 
\end{align*}
are a subsolution and a supersolution of the equation satisfied by $v^{\l_k}_1$ and to apply  Lemma~\ref{lem:comp_prin}. By \eqref{eq:dc_1} and (H2), we have
\begin{equation}\label{stimaR_2}
\begin{split} 
|\l_k c_k^{-1}(\med{v^{\l_k}_1} -\med{ v^{\l_k}_2})|\le  c_k^{-1}\max_{\a\in\mA} \big( \bar K\max_{x,a}|b^{1,k}_\a(x,a)-b^{2,k}_\a(x,a)|\\
+\max_{x,a}|f^{1,k}_\a(x,a)-f^{2,k}_\a (x,a)|+\max_{x}|F^{1,k}_\a(x)-F^{2,k}_\a(x)|\big)\int_\G dx= o_k(1).	
\end{split}
\end{equation}
Furthermore, taking into account  \eqref{eq:bound_2} and (H2), we have
\begin{align*} 
c_k^{-1}&\big(H^{1,k}(x,\partial w^{\l_k}_{2} )-H^{2,k}(x,\partial w^{\l_k}_{2} ))\\
&\le c_k^{-1}\max_{\a\in\mA}\big(\|\partial w^{\l_k}_{2}\|_{L^\infty(\G)} \max_{x,a}|b^{1,k}_\a(x,a)-b^{2,k}_\a(x,a)|\\
&+  \max_{x,a} |f^{1,k}_\a(x,a)-f^{2,k}_\a(x,a)| \big)\le
c_k^{-1}\max_{\a\in\mA}\big( \bar K\max_{x,a}|b^{1,k}_\a(x,a)-b^{2,k}_\a(x,a)|\\
&+ \max_{x,a} |f^{1,k}_\a(x,a)-f^{2,k}_\a(x,a)| \big)=o_k(1);
\end{align*}
by our choice of $c_k$, we also have
\begin{equation*}
\left|c_k^{-1}\left(F^{1,k}_\a(x)-F^{2,k}_\a(x)\right)\right|\leq c_k^{-1}\max_{x}\left|F^{1,k}(x)-F^{2,k}(x)\right|\leq 1/k.
\end{equation*}
By these estimates, \eqref{stimaR_1} and \eqref{stimaR_2}, we obtain the claim \eqref{eq:stima1_R}
and we conclude that for any $\a\in\mA$
\begin{equation}\label{eq:conv_R}
R^k_\a\to 0\qquad\text{for $k\to \infty$, uniformly in $\G_\a$}.
\end{equation}
For  $k\to \infty$, $W^k$ uniformly converges to a function $W\in C^2(\G)$ along with all its derivatives up to order $2$. Moreover, taking into account \eqref{eq:conv_g} and \eqref{eq:conv_R},  $W$ is a solution to
\[  
\left\{
\begin{array}{ll}
-\m_\a\partial^2 W+g\partial W=0, & x\in\left(\Gamma_{\alpha}\backslash\mV\right),\alpha\in\mA,\\[3pt]
{\sum_{\alpha\in\mA_{i}}\gamma_{i\alpha}\mu_{\alpha}\partial_{\alpha} W _i(\nu_i)=0,} 
& \nu_{i}\in\mV,\\[3pt]
W_i|_{\Gamma_{\alpha}}(\nu_i)=W_i|_{\Gamma_{\beta}}(\nu_i), & \alpha,\beta\in\mA_{i},\nu_{i}\in\mV,
\end{array}
\right.
\]
By Lemma \ref{lem:max_princ}, it follows that $W$ is constant and,   since $\med{W^k}=0$ for all $k$, then also $\med{W}=0$. It follows that $W\equiv 0$ which gives a contradiction to $\|W^k\|_{C^2(\G)}=1$ for all $k\in\N$.\\
The estimate  for the solutions of the ergodic problem \eqref{eq:HJ} follows immediately from Prop. \ref{lemma:bounds_discount_ergodic}.(ii) and since \eqref{eq:dc_0} is independent of $\l$.
\end{proof}
\begin{remark}\label{rmk:dc_Linfinito}
When assumption~$(ii)$ drops, it is possible to prove a $L^\infty$-continuous dependence estimate. More precisely, assuming $(i)$ and $(iii)$ of Theorem~\ref{thm:dip_continua}, there exists a constant $C_0$ such that
\begin{equation} \label{eq:dc_infinito}
\begin{split}
\|\wl_1-\wl_2\|_{L^\infty(\G)}&\le  C_0\max_{\a\in\mA}\Big( \max_{x,a}|b^1_\a(x,a)-b^2_\a(x,a)|\\
&+\max_{x,a}|f^1_\a(x,a)-f^2_\a(x,a)|+\max_{x}|F^1_\a(x)-F^2_\a(x)|\big).
\end{split}
\end{equation}
Estimate \eqref{eq:dc_infinito} also holds for $v_i$, $i=1,2$,  solution to \eqref{eq:HJ} corresponding to $H(x,p)=H^i(x,p)+F^i(x)$. \\
The proof is similar (and simpler) as the one of Theorem~\ref{thm:dip_continua} so we shall omit it and we refer the reader to \cite[Theorem 2.1]{marchi}.
\end{remark}
\section{Quasi-stationary Mean Field Games on networks}\label{sec:quasi_stat}
Quasi-stationary Mean Field Games, introduced in \cite{mouzouni} (see also \cite{cristiani_et_altri}), modelize the case when the agent cannot predict the evolution of the population in the future, as in the classical MFG theory, but, at each instant, it decides its behaviour only on the basis of the information available at the current time. This feature leads to systems given by an evolutive Fokker-Planck equation and a stationary HJ equation (which in fact depends on time through the cost). More precisely, at each  time $t\in [0,T]$, given the distribution of the population $m(t)$, the representative agent assumes that it will not change in the future and solves an optimal control problem with long-run average cost functional 
\[
\rho(t)=\inf_{a}\,\liminf_{T\to \infty}\frac{1}{T}\EE_{y,t}\left[\int_t^T \left(f(Y_s,a_s)+F[ m(t)](Y_s)\right)ds\right]
\]
where $Y_s$ is a fictitious dynamics on the network such $Y_t=x$ and $F$ is an additional cost term which depends on the distribution of the agents. If the corresponding HJ equation, see \eqref{eq:HJ}, admits a smooth solution $v(t)$,  then the optimal feedback law $a^\star_t(x)=-\partial_pH(x,\partial v(t))$ gives the vector field governing the evolution of the distribution of  the population at time $t$. 
This leads to study a class of quasi-stationary MFG systems
\begin{equation}
\begin{cases}
-\mu_{\alpha}\partial^{2}v+H (x,\partial v)+\r=F[m(t)](x), & \left(x,t\right)\in\left(\Gamma_{\alpha}\backslash\mV\right)\times(0,T),\alpha\in \cA,\\
\partial_{t}m-\mu_{\alpha}\partial^{2}m -\partial\left(m\partial_{p}H (x,\partial v)\right)=0, & \left(x,t\right)\in\left(\Gamma_{\alpha}\backslash\mV\right)\times(0,T),\alpha\in \cA,\\
 \sum_{\alpha\in\mA_{i}}\gamma_{i\alpha}\mu_{\alpha}\partial_{\alpha}v(\nu_{i},t)=0, & (\nu_{i},t)\in\mV\times(0,T),\\
\sum_{\alpha\in\mA_{i}}\mu_{\alpha}\partial_{\alpha}m(\nu_{i},t)+n_{i\alpha}\partial_{p}H_\alpha (\nu_{i},\partial v_\a(\nu_i,t))m|_{\G_\a}(\nu_{i},t)=0, & (\nu_{i},t)\in\mV\times(0,T),\\
v|_{\G_\a}(\nu_{i},t)=v|_{\G_\b}(\nu_{i},t),\ \dfrac{m|_{\G_\a}(\nu_{i},t)}{\gamma_{i\alpha}}=\dfrac{m|_{\G_\b}(\nu_{i},t)}{\gamma_{i\beta}}, & \alpha,\beta\in\mA_{i},(\nu_{i},t)\in\mV\times(0,T),\\
\med{v}=0,\ m\left(x,0\right)=m_{0}(x), & x\in\Gamma,
\end{cases}\label{eq:MFG}
\end{equation}
where $H$ as in \eqref{eq:hamiltonian},  $m_0\in \cM$ describes the initial distribution of the players and $F$ is the nonlocal coupling cost (see below for the precise assumptions). These systems loss the standard forward-backward structure of MFG. In order to establish the existence of a solution, it is crucial to have some regularity in time for the value function~$v$. In the classical approach for MFG, such a regularity follows from the parabolicity of HJ equation; here, it will be retrieved using the continuous dependence estimate of Section~\ref{sec:continuous_dep_est}.\\
We first recall some basic results concerning the Fokker-Planck equation  
\begin{equation}\label{eq:FP}
\begin{cases}
\partial_{t}m-\mu_{\alpha}\partial^{2}m-\partial\left(bm\right)=0, & \text{in }(\Gamma_{\alpha}\backslash\mV)\times (0,T),~\alpha\in\mA,\\
\sum_{\alpha\in\mA_{i}}\mu_{\alpha}\partial_{\alpha}m(\nu_{i},t)
+ n_{i\alpha}b_\a(\nu_{i},t)m|_{\Gamma_{\alpha}}(\nu_{i},t)=0, & t\in(0,T),~\nu_{i}\in\mV,\\
\dfrac{m|_{\Gamma_{\alpha}}(\nu_{i},t)}{\gamma_{i\alpha}}=\dfrac{m|_{\Gamma_{\beta}}(\nu_{i},t)}{\gamma_{i\beta}}, &  t\in(0,T),~\alpha,\beta\in\mA_i,~ \nu_{i}\in\mV\backslash\partial\Gamma,\\
m\left(x,0\right)=m_{0}(x), & x\in\Gamma.
\end{cases}
\end{equation}
The vertex conditions for the FP equation are obtained by duality with respect to the corresponding vertex conditions for the HJ equation and  express conservation of the flux and,  respectively, a rule for the distribution of the density. \\
We introduce suitable parabolic spaces for weak solution of the FP equation. We set $V=H^1(\G)$ and
\[
H^{1}_{b}(\Gamma):=\left\{ v:\Gamma\rightarrow\R \text{ s.t. }v_{\alpha}\in H^1 (0,\ell_{\alpha})\text{ for all }\alpha\in\mA\right\},
\]
(unlike $V$, continuity at the vertices is not required), endowed with the norm
\[
\left\Vert v\right\Vert _{H^1_{b}(\Gamma)}=\left( \sum_{\alpha\in\mA}
 \Vert \partial v_{\alpha} \Vert _{L^{2}\left(0,\ell_{\alpha}\right)}^{2}+
\left\Vert v\right\Vert _{L^2(\Gamma)}^{2}\right)^{\frac 1 2}.
\]
By Remark~\ref{rmk:extension}, for $v\in H^{1}_{b}(\Gamma)$, we still denote $v_\alpha$ the extension by continuity of~$v_\alpha$ on the whole interval~$[0,\ell_\alpha]$.\\
We also define
\begin{equation*}
W:=\left\{ w:\Gamma\rightarrow\mathbb{R}:\;w\in H_{b}^{1}\left(\Gamma\right)  \text{  and }\frac{w|_{\Gamma_{\alpha}}(\nu_{i})}{\gamma_{i\alpha}}=\frac{w|_{\Gamma_{\beta}}(\nu_{i})}{\gamma_{i\beta}}\text{ for all }
i\in I, \; \alpha,\beta\in\mA_{i}\right\},
\end{equation*}
\begin{align*}
PC(\Gamma\times [0,T]):=\{&v:\Gamma\times [0,T]\to \R:\,v(\cdot,t)\in PC(\Gamma)\, \text{for all $t\in [0,T]$ and}\\
&\text{$v|_{\Gamma_\alpha\times [0,T]}\in C(\Gamma_\alpha\times [0,T])$ for all $\alpha\in \cA$}
\}.	
\end{align*}
\begin{definition}\label{def:sol_FP}For $m_0\in L^{2}(\G)$, a weak solution of \eqref{eq:FP} is a function $ m\in L^{2}\left(0,T;W\right)\cap C([0,T]; L^2(\Gamma))$ such that $\partial_{t}m\in L^{2}\left(0,T;V'\right)$ and
\begin{equation*}
\begin{cases}
\ds \langle \partial_{t}m,v\rangle _{V', V}+\int_{\Gamma}\mu\,\partial m\partial vdx+\int_{\Gamma}bm\partial vdx=0\quad\text{for all }v\in H^1(\G),\text{ a.e. }t\in (0,T),\\
m\left(\cdot,0\right)=m_{0}.
\end{cases}
\end{equation*}
\end{definition}
The following result concerns existence, uniqueness and stability for the solution of \eqref{eq:FP}
(see \cite[Theorem 3.1 and Lemma 3.1]{adlt2})
\begin{proposition}\label{thm: existence and uniqueness FK}
We have
\begin{itemize}
\item[(i)]  For $ b\in L^\infty (\Gamma \times (0,T))$, $m_0\in L^2(\Gamma)$,  there exists a unique 
weak solution to \eqref{eq:FP}. Moreover, there exists  $C=C(\Vert b\Vert _{\infty})$ such that 
\begin{equation}\label{eq:stima_FP}
\left\Vert m\right\Vert _{L^{2}\left(0,T;W\right)}  
+\left\Vert m\right\Vert _{L^{\infty}\left(0,T; L^2\left(\Gamma\right)\right)}+
\left\Vert \partial_{t}m\right\Vert _{L^{2}\left(0,T;V'\right)}\le C\left\Vert m_{0}\right\Vert _{L^2(\Gamma)}.
\end{equation}
Moreover, if $m_0\in \cM$, then $m(t)\in \cM$ for all $t\in [0,T]$.
\item[(ii)] Let $b^n$ be  such that
\begin{align*}
 b^n\longrightarrow b\text{ in }
L^{2}\left(\Gamma \times (0,T)\right),\quad\|b\|_{L^{\infty}\left(\Gamma \times (0,T)\right)}, \|b^n\|_{L^{\infty}\left(\Gamma \times (0,T)\right)}\le K 
\end{align*}
with $K$ independent of $n$.
Let $ m^n $  (respectively  $m$) be the solution of   \eqref{eq:FP} corresponding to  the coefficient $b^n$ (resp. $b$).
Then, the sequence  $(m^n)$ converges to $m$ in $L^{2}\left(0,T;W\right)\cap L^\infty\left(0,T;L^2(\Gamma)\right)$, and the sequence $(\partial_t m^n)$ converges to $(\partial_t m) $ in $L^{2}\left(0,T;V'\right)$. 
\end{itemize}
\end{proposition}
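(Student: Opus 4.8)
The statement is the network counterpart of classical linear parabolic theory, so my plan is to recast the problem in a standard variational framework and then invoke textbook arguments. The one genuinely non-standard feature of Definition~\ref{def:sol_FP} is that the solution space $W$ carries the \emph{weighted} continuity condition $w|_{\Gamma_\alpha}(\nu_i)/\gamma_{i\alpha}=w|_{\Gamma_\beta}(\nu_i)/\gamma_{i\beta}$, whereas the test functions lie in $V=H^1(\Gamma)$, which carries plain continuity at the vertices; hence the bilinear form $a(t;m,v)=\int_\Gamma\mu\,\partial m\,\partial v\,dx+\int_\Gamma bm\,\partial v\,dx$ is not of coercive type on a single Hilbert space. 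The first step is to remove this asymmetry by a change of unknown: I would fix a weight $\rho\colon\Gamma\to(0,\infty)$, affine on each edge, with $\rho_\alpha(\pi_\alpha^{-1}(\nu_i))=\gamma_{i\alpha}$ at each endpoint; such a $\rho$ exists and is bounded and bounded away from zero, since all the $\gamma_{i\alpha}$ are positive. Then $m\in W$ if and only if $n:=m/\rho\in V$ --- indeed $(m/\rho)|_{\Gamma_\alpha}(\nu_i)=m|_{\Gamma_\alpha}(\nu_i)/\gamma_{i\alpha}$, which is independent of $\alpha$ precisely under the $W$-condition --- and writing $m=\rho n$ the weak formulation becomes
\[
\int_\Gamma\rho\,(\partial_t n)\,v\,dx+\tilde a(t;n,v)=0\quad\forall\,v\in V,\qquad
\tilde a(t;n,v):=\int_\Gamma\mu\,\partial(\rho n)\,\partial v\,dx+\int_\Gamma b\rho n\,\partial v\,dx .
\]
Now $\tilde a$ acts on $V\times V$; it is continuous with a constant controlled by $\|b\|_\infty$ and the fixed data $\mu,\rho$, and it satisfies a G\aa rding inequality, $\tilde a(t;n,n)\ge\int_\Gamma\mu\rho|\partial n|^2\,dx-C\|n\|_{L^2}\|\partial n\|_{L^2}\ge c\,\|\partial n\|_{L^2}^2-C'\|n\|_{L^2}^2$, with $c=\tfrac12\min_\alpha\mu_\alpha\,\min_\Gamma\rho>0$ and $C,C'$ depending only on $\|b\|_\infty$ and the fixed data.

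For part~(i) I would then work in the Gelfand triple $V\hookrightarrow L^2_\rho(\Gamma)\hookrightarrow V'$ (whose norms are equivalent to the unweighted ones, $\rho$ being bounded above and below) and apply the classical Lions existence theorem for linear parabolic equations --- Galerkin truncation in $V$, the G\aa rding estimate and Gronwall's lemma --- to get a unique $n\in L^2(0,T;V)$ with $\partial_t n\in L^2(0,T;V')$, $n\in C([0,T];L^2(\Gamma))$ and $n(0)=m_0/\rho$, together with $\|n\|_{L^2(0,T;V)}+\|n\|_{L^\infty(0,T;L^2)}+\|\partial_t n\|_{L^2(0,T;V')}\le C(\|b\|_\infty)\|m_0\|_{L^2}$. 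Setting $m:=\rho n$ would give the (unique) weak solution of~\eqref{eq:FP} and, by equivalence of norms, estimate~\eqref{eq:stima_FP}. Conservation of mass follows by testing the weak formulation with $v\equiv1\in H^1(\Gamma)$: since $\partial v=0$ one gets $\tfrac{d}{dt}\int_\Gamma m(t)\,dx=0$, so $\int_\Gamma m(t)\,dx=\int_\Gamma m_0\,dx=1$ when $m_0\in\mathcal{M}$. For nonnegativity I would run a Stampacchia argument on the $n$-equation --- now legitimate, the key point being that the negative part of an $H^1(\Gamma)$ function is again in $H^1(\Gamma)$: testing with $n^-$ and using that $\partial n^+$ and $\partial n^-$ have a.e. disjoint supports yields $\tfrac{d}{dt}\|n^-(t)\|_{L^2_\rho}^2\le C(\|b\|_\infty)\|n^-(t)\|_{L^2}^2$, and since $n^-(0)=(m_0/\rho)^-=0$, Gronwall forces $n^-\equiv0$, i.e. $m=\rho n\ge0$; combined with conservation of mass this gives $m(t)\in\mathcal{M}$ for all $t$.

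For part~(ii), I would subtract the equations for $m^n$ (coefficient $b^n$) and $m$ (coefficient $b$) and use $b^n m^n-bm=b^n(m^n-m)+(b^n-b)m$: then $z^n:=(m^n-m)/\rho\in L^2(0,T;V)$ solves a linear parabolic problem of the same type, with the G\aa rding form built from $b^n$ (uniform constants, as $\|b^n\|_\infty\le K$), vanishing initial datum, and source $-\int_\Gamma(b^n-b)m\,\partial v\,dx$. The standard energy estimate, using $z^n(0)=0$, gives
\[
\sup_{[0,T]}\|z^n(t)\|_{L^2}^2+\int_0^T\|\partial z^n(t)\|_{L^2}^2\,dt\le C\int_0^T\!\!\int_\Gamma|b^n-b|^2\,|m|^2\,dx\,dt ,
\]
and I would conclude by noting that $m\in L^\infty(0,T;L^2(\Gamma))\cap L^2(0,T;H^1_b(\Gamma))\subset L^4(\Gamma\times(0,T))$ (parabolic interpolation together with the one-dimensional embedding $H^1\hookrightarrow L^\infty$ on each edge), while $b^n-b\to0$ in $L^2(\Gamma\times(0,T))$ and stays bounded in $L^\infty$, hence $b^n-b\to0$ in $L^4(\Gamma\times(0,T))$; H\"older's inequality then makes the right-hand side tend to zero. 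This yields $m^n\to m$ in $L^2(0,T;W)\cap L^\infty(0,T;L^2(\Gamma))$, and since from the equation $\partial_t z^n$ is controlled in $L^2(0,T;V')$ by $z^n$ and the source, also $\partial_t m^n\to\partial_t m$ in $L^2(0,T;V')$. I expect the only genuine difficulty to be the opening reduction: recognizing that the mismatch between the trial space $W$ and the test space $V$ can be absorbed by the weight $\rho$ with $\rho_\alpha(\nu_i)=\gamma_{i\alpha}$, which turns the weighted vertex continuity of $W$ into plain continuity and makes the variational setting conforming; after that, the rest is a routine application of linear parabolic theory.
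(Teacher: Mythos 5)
Your proof is correct and is essentially the argument behind the reference the paper cites for this proposition (the paper gives no proof of its own, pointing to Achdou--Dao--Ley--Tchou): the weight $\rho$, affine on each edge with $\rho_\alpha(\nu_i)=\gamma_{i\alpha}$, which you use to turn the nonconforming pair $(W,V)$ into a single Gelfand triple, is precisely the function $\varphi$ the paper itself introduces later in the uniqueness part of the MFG theorem. The remaining steps --- Lions/Galerkin with a G\aa rding inequality, testing with $1$ and with $n^-$ for conservation of mass and positivity, and the $L^4$ parabolic interpolation for the stability part (ii) --- are standard and correctly executed.
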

\begin{proposition}\label{prop:stima_wass}
For $m_0\in L^{2}(\G)\cap\cM$, let $m$ be the solution of \eqref{eq:FP} found in Proposition~\ref{thm: existence and uniqueness FK}. Then there exists a constant $C_W$, depending only on 
$\|b\|_{L^{\infty}}$ and $\|m_0\|_{L^2}$, such that
\begin{equation}\label{eq:stima_wass}
\Wass_1(m(t),m(s))\le C_W|t-s|^\half	
\end{equation}
\end{proposition}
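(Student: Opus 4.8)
The plan is to combine the Kantorovich--Rubinstein dual representation \eqref{eq:wass_dist} of $\Wass_1$ with the weak formulation of \eqref{eq:FP}, differentiating the pairing $\int_\G f\,m(\tau)\,dx$ in time against a fixed admissible test function. First I would fix $s,t\in[0,T]$ and an arbitrary $f:\G\to\R$ with $|f(x)-f(y)|\le d_\G(x,y)$ for all $x,y$. Since the geodesic distance is finite and continuous, such an $f$ lies in $C(\G)$, and on each edge its restriction $f_\a$ is $1$-Lipschitz in the arc-length parametrization, so $\|\partial f_\a\|_{L^\infty(0,\ell_\a)}\le 1$; continuity at the vertices is automatic from the Lipschitz bound, whence $f\in W^{1,\infty}(\G)\subset V=H^1(\G)$ and $f$ is admissible in Definition~\ref{def:sol_FP}.

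Next, since $m\in C([0,T];L^2(\G))$ and $\partial_t m\in L^2(0,T;V')$, the $V'$-valued curve $\tau\mapsto m(\tau)$ is absolutely continuous, so $m(t)-m(s)=\int_s^t\partial_\tau m(\tau)\,d\tau$ in $V'$. Pairing this identity with $f\in V$ and inserting the weak formulation gives
\begin{equation*}
\int_\G f\,\big(m(t)-m(s)\big)\,dx=-\int_s^t\Big(\int_\G \mu\,\partial m(\tau)\,\partial f\,dx+\int_\G b(\tau)\,m(\tau)\,\partial f\,dx\Big)\,d\tau.
\end{equation*}
Then I would estimate the right-hand side. Using $|\partial f|\le 1$ a.e., the Cauchy--Schwarz inequality and the finiteness of the total length of $\G$, the first integrand is bounded by $C\|m(\tau)\|_W$ with $C$ depending only on $\max_\a\mu_\a$ and on the network; since $m(\tau)\in\cM$ by Proposition~\ref{thm: existence and uniqueness FK}(i), so that $\|m(\tau)\|_{L^1(\G)}=1$, the second integrand is bounded by $\|b\|_{L^\infty}$. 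A further application of Cauchy--Schwarz in $\tau$ together with the a priori bound \eqref{eq:stima_FP} turns the first contribution into $C\,|t-s|^{1/2}\,\|m\|_{L^2(0,T;W)}\le C\,|t-s|^{1/2}\,\|m_0\|_{L^2}$, while the second equals $\|b\|_{L^\infty}|t-s|\le \|b\|_{L^\infty}T^{1/2}|t-s|^{1/2}$ since $|t-s|\le T$. Taking the supremum over all admissible $f$ and invoking \eqref{eq:wass_dist} yields \eqref{eq:stima_wass} with $C_W$ depending only on $\|b\|_{L^\infty}$ and $\|m_0\|_{L^2}$ (the dependence on the fixed data $T$ and on the network being understood).

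There is no serious obstacle here; the points that require a little attention are that geodesic-Lipschitz functions genuinely belong to $V$ (so that they may legitimately be used as test functions), and the justification of the fundamental theorem of calculus $\tfrac{d}{d\tau}\int_\G f\,m(\tau)\,dx=\langle\partial_\tau m(\tau),f\rangle_{V',V}$, which relies only on $m\in C([0,T];L^2(\G))\hookrightarrow C([0,T];V')$ and $\partial_t m\in L^2(0,T;V')$ and does not need $m\in L^2(0,T;V)$ --- only $m\in L^2(0,T;W)$ is at our disposal, but this is enough because the test function $f$ is fixed in $V$. The remaining computations are routine.
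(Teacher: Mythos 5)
Your proof is correct and follows essentially the same route as the paper's: test the weak formulation with a geodesically $1$-Lipschitz function (which lies in $H^1(\Gamma)$ with $|\partial f|\le 1$), bound the diffusion term via Cauchy--Schwarz and \eqref{eq:stima_FP} to extract the factor $|t-s|^{1/2}$, bound the drift term using $\|m(\tau)\|_{L^1(\Gamma)}=1$, and conclude by the Kantorovich--Rubinstein duality \eqref{eq:wass_dist}. Your additional remarks on the admissibility of the test function and on the justification of the time integration are sound and merely make explicit what the paper leaves implicit.
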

\begin{proof}
Let $\phi:\G\to\R$ with $|\phi(x)-\phi(y)|\le d_\G(x,y)$, hence $\phi\in H^1(\G)$. For $s,t\in [0,T]$ with $s<t$, by Definition \ref{def:sol_FP} and regularity of $m$, we have
\begin{align*}
& \int_{\Gamma} \phi(x)(m(t)-m(s))dx \le\int_s^t\int_{\Gamma}(\mu|\partial m|\,|\partial \phi| +m|b|\,|\partial \phi| )dxdr\\	
&\le \|\mu\|_\infty\int_s^t\int_{\Gamma}|\partial m|dxdr+ \|b\|_{L^\infty} \int_s^t\int_{\Gamma}m\,dxdr\\
&\le \|\mu\|_\infty \left[\int_s^t\int_{\Gamma}|\partial m|^2dxdr\right]^\half \left[\int_s^t\int_{\Gamma}1dxdr\right]^\half+
\|b\|_{L^\infty} \int_s^t\int_{\Gamma}mdxdr.
\end{align*}
Exploiting $\int_{\Gamma}m(r)dx=1$ for any $r\in [0,T]$,  \eqref{eq:stima_FP} and \eqref{eq:wass_dist}, by the previous inequality we get \eqref{eq:stima_wass}.
\end{proof}

We now prove the well posedness of system \eqref{eq:MFG}.
\begin{theorem}
Assume (H1), (H2), $m_0\in L^2(\G)\cap\cM$, $H_\a\in C^{1,\tau}(\Gamma_\a\times(-\bar K,\bar K))$ (where $\bar K$ as in~\eqref{eq:bound_2}) and $F:\cM\to L^2(\G)$ satisfies
\begin{itemize}
\item[(F)]
$F_\a:\mM \to C^{0,\theta}(0,\ell_\a)$, $\a\in\mA$, and there exist $C_F>0$ and $\theta\in(0,1]$ s.t.
\[\begin{array}{l}
\max_{\a}\|F_\a[m]\|_{C^{0,\theta}}\le C_F,\\[4pt]
\max_{\a}\|F_\a[m_1]-F_\a[m_2]\|_{C^{0,\theta}}\le C_F\Wass_1(m_1,m_2)
\end{array} 
\]
for all $m$, $m_1$, $m_2$, $\a\in\mA$.  
\end{itemize} 
Then, the system \eqref{eq:MFG} admits a unique solution $(u,\r,m)$, where
$(u,\r)\in   C([0,T], C^2(\G))\times C([0,T])$ is a classical solution to the  HJ equation for any $t\in [0,T]$ and   $ m\in L^{2}\left(0,T;W\right)\cap C([0,T]; L^2(\Gamma)\cap\cM)$ with $\partial_{t}m\in L^{2}\left(0,T;V'\right)$ is a weak solution to the FP equation.
\end{theorem}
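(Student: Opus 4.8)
The plan is to exploit the formal decoupling of system~\eqref{eq:MFG}: once the density~$m$ is prescribed, the first, third and last equations constitute the stationary HJ problem~\eqref{eq:HJ} for the pair $(v,\rho)$, and once $v$ is known the remaining equations are precisely the Fokker--Planck problem~\eqref{eq:FP} with drift $b=\partial_p H(\cdot,\partial v)$; existence will then follow from a Schauder fixed point argument on~$m$. Given $m\in C([0,T];\cM)$, I define $\cT(m)$ as follows: for each $t$ I solve~\eqref{eq:HJ} with Hamiltonian $H(x,p)-F[m(t)](x)$; by Proposition~\ref{lemma:bounds_discount_ergodic}, whose a priori estimates are unaffected by the bounded, $\theta$-H\"older perturbation $F[m(t)]$ (in view of~(F)), this has a unique classical solution $(v(t),\rho(t))$ with $\med{v(t)}=0$ and $\|v(t)\|_{C^{2,\theta}(\G)}\le\bar K$ uniformly in~$t$; then I set $b(x,t):=\partial_p H_\a(x,\partial v_\a(x,t))$ on $\G_\a$, which satisfies $\|b\|_{L^\infty(\G\times(0,T))}\le K_H$ since $H_\a\in C^{1,\tau}(\G_\a\times(-\bar K,\bar K))$, and I let $\cT(m)$ be the unique weak solution of~\eqref{eq:FP} with drift~$b$ and datum~$m_0$ furnished by Proposition~\ref{thm: existence and uniqueness FK}.

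The fixed point should be run in $\mathcal K:=\{m\in C([0,T];\cM):\ m(0)=m_0,\ \Wass_1(m(t),m(s))\le C_W|t-s|^{1/2}\ \text{for all }s,t\in[0,T]\}$, where $C_W$ is the constant of Proposition~\ref{prop:stima_wass} associated with $\|b\|_\infty\le K_H$ and $\|m_0\|_{L^2}$. Proposition~\ref{prop:stima_wass} gives $\cT(\mathcal K)\subset\mathcal K$, and $\mathcal K$ is convex and, since $\G$ compact makes $(\cM,\Wass_1)$ compact and the uniform $\tfrac12$-H\"older modulus yields equicontinuity, $\mathcal K$ is compact in $C([0,T];(\cM,\Wass_1))$ by the Ascoli--Arzel\`a theorem.

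The decisive point is the continuity of~$\cT$, and here the continuous dependence estimate of Section~\ref{sec:continuous_dep_est} plays the role of the parabolic time-regularization that is absent in this quasi-stationary setting. If $m^n\to m$ in $C([0,T];\cM)$, then by~(F) $F[m^n(t)]\to F[m(t)]$ in $C^{0,\theta}(\G_\a)$ uniformly in~$t$; applying Theorem~\ref{thm:dip_continua} to the HJ problems for $m^n$ and~$m$ (with $b$, $f$ unchanged and only the data varying, so that the Hamiltonian-seminorm term in~\eqref{eq:dc_0} drops) yields $\|v^n(t)-v(t)\|_{C^2(\G)}\le C\,\Wass_1(m^n(t),m(t))\to0$ uniformly in~$t$. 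Hence, by uniform continuity of $\partial_p H_\a$ on $\G_\a\times[-\bar K,\bar K]$, $b^n\to b$ uniformly on $\G\times(0,T)$ with $\|b^n\|_\infty\le K_H$, so Proposition~\ref{thm: existence and uniqueness FK}(ii) gives $\cT(m^n)\to\cT(m)$ in $L^\infty(0,T;L^2(\G))$; since total variation dominates $\Wass_1$ on the bounded space~$\G$, one has $\Wass_1(\mu,\nu)\le C\|\mu-\nu\|_{L^2(\G)}$ on $\cM$, whence $\cT(m^n)\to\cT(m)$ in $C([0,T];(\cM,\Wass_1))$. Schauder's theorem now produces a fixed point $m=\cT(m)$; applying the continuous dependence estimate once more shows that $t\mapsto(v(t),\rho(t))$ is (H\"older) continuous into $C^2(\G)\times\R$, so $(v,\rho,m)$ is a solution of~\eqref{eq:MFG} of the asserted regularity.

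For uniqueness, let $(v_i,\rho_i,m_i)$, $i=1,2$, be two solutions. Since $m_1(0)=m_2(0)=m_0$ and the HJ problems for $v_i(t)$ differ only through $F[m_i(t)]$, Theorem~\ref{thm:dip_continua} (and estimate~\eqref{eq:dc_1} in its proof, for the $\rho$-part) together with~(F) gives $\|v_1(t)-v_2(t)\|_{C^2(\G)}+|\rho_1(t)-\rho_2(t)|\le C\,\Wass_1(m_1(t),m_2(t))$, and therefore $\|b_1(\cdot,t)-b_2(\cdot,t)\|_{L^\infty(\G)}\le\omega\big(\Wass_1(m_1(t),m_2(t))\big)$ for a modulus $\omega$ controlled by $\|H\|_{C^{1,\tau}}$. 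Writing the linear equation solved by $m_1-m_2$ (with zero datum), performing the energy estimate for~\eqref{eq:FP} as in the proof of Proposition~\ref{thm: existence and uniqueness FK}, and using $\|m_i\|_{L^\infty(0,T;L^2)}\le C\|m_0\|_{L^2}$ and $\Wass_1(m_1(t),m_2(t))\le C\|m_1(t)-m_2(t)\|_{L^2(\G)}$, one is led to a Gr\"onwall-type integral inequality for $\|m_1(t)-m_2(t)\|_{L^2(\G)}$ with vanishing datum, which forces $m_1\equiv m_2$ on $[0,T]$, hence $v_1\equiv v_2$ and $\rho_1\equiv\rho_2$. I expect the two delicate points to be: (a) transferring the whole apparatus of Section~\ref{sec:continuous_dep_est} to the present Hamiltonian, which now carries the merely H\"older term $F[m(t)]$ (so that $\bar K$ and $K_H$ are chosen consistently); and, above all, (b) that the only time regularity available for~$v$ is the one extracted from the continuous dependence estimate, which is exactly what makes both the continuity of~$\cT$ and the Gr\"onwall step go through.
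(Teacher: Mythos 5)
Your proposal follows essentially the same route as the paper for both parts: existence via Schauder on the convex compact set $\{m\in C([0,T];\cM):\Wass_1(m(t),m(s))\le C_W|t-s|^{1/2}\}$, with continuity of the fixed-point map supplied by Theorem~\ref{thm:dip_continua} (giving $\|u^n(t)-u(t)\|_{C^2(\G)}\le C\Wass_1(m^n(t),m(t))$) combined with the stability statement of Proposition~\ref{thm: existence and uniqueness FK}(ii); and uniqueness via the chain ``continuous dependence $\Rightarrow$ $\|b_1-b_2\|_{L^\infty}\le C\,\Wass_1(m_1(t),m_2(t))\le C\|m_1(t)-m_2(t)\|_{L^2}$ $\Rightarrow$ energy estimate and Gr\"onwall with zero initial datum.'' This is exactly the paper's argument, and your identification of the continuous dependence estimate as the substitute for parabolic time-regularization is the right reading.

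There is, however, one concrete gap in your uniqueness step, precisely where you write ``performing the energy estimate for~\eqref{eq:FP} as in the proof of Proposition~\ref{thm: existence and uniqueness FK}.'' The weak formulation in Definition~\ref{def:sol_FP} admits only test functions in $V=H^1(\G)$, i.e.\ continuous at the vertices, whereas the difference $m_1-m_2$ lies in $W$ and satisfies only the weighted transmission condition $\frac{(m_1-m_2)|_{\G_\alpha}(\nu_i)}{\gamma_{i\alpha}}=\frac{(m_1-m_2)|_{\G_\beta}(\nu_i)}{\gamma_{i\beta}}$; unless all the $\gamma_{i\alpha}$ at a vertex coincide, it is discontinuous there and is \emph{not} an admissible test function, so the naive computation of $\frac{d}{dt}\|m_1-m_2\|_{L^2(\G)}^2$ is not justified. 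The paper's fix is to introduce the edgewise-affine weight $\varphi$ with $\varphi_\alpha(\nu_i)=\gamma_{i\alpha}$, test with $M=\varphi^{-1}(m_1-m_2)\in H^1(\G)$, and run Gr\"onwall on the equivalent weighted norm $\|(m_1-m_2)(\varphi^{-1})^{1/2}\|_{L^2(\G)}$, absorbing the extra commutator terms coming from $\partial(\varphi^{-1})$ by Cauchy--Schwarz into the diffusion term. You need this (or an equivalent device) for the Gr\"onwall step to go through; once it is inserted, your argument matches the paper's.
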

\begin{proof}\hfill\\
\textit{Existence: }
We consider the convex, compact set 
\[\mX=\Big\{m\in  C([0,T]; \cM):\,
\Wass_1(m(t),m(s))\le C_W|t-s|^\half, s,t \in [0,T]\Big\},\]
where $C_W$ as in \eqref{eq:stima_wass},  and we define a map $\mT: \mX\to\mX$ in the following way: given $m\in \mX$,  let $(u(t),\rho(t))$,  $t\in [0,T]$, be  the solution  of the HJ equation  
\begin{equation}\label{eq:MFG_HJ}
\begin{cases}
-\mu_{\alpha}\partial^{2}u+H (x,\partial u)+\r=F[m(t)](x),& 
x\in\left(\Gamma_{\alpha}\backslash\mV\right),\alpha\in \cA,\\
\sum_{\alpha\in\mA_{i}}\gamma_{i\alpha}\mu_{\alpha}\partial_{\alpha}u(\nu_{i})=0, & \nu_{i}\in\mV ,\\
u|_{\G_\a}(\nu_{i})=u|_{\G_\b}(\nu_{i}),&  \alpha,\beta\in\mA_{i},\nu_{i}\in\mV,\\
\med{u}=0 & x\in\Gamma.
\end{cases}
\end{equation}
Then, $\bm=\mT(m)$ solves the FP equation \eqref{eq:FP} with $b=\partial_p H(x,\partial u)$. \\ 
Note first that the map $\mT$ is well defined. Indeed,  thanks to  Prop.\ref{lemma:bounds_discount_ergodic}, Theorem \ref{thm:dip_continua} and (F), there exists $(u,\r)\in   C([0,T], C^2(\G))\times C([0,T])$
 which solves \eqref{eq:MFG_HJ} for any $t\in [0,T]$. Moreover, by  Prop. \ref{thm: existence and uniqueness FK}, there exists a unique solution $\bm$, in the sense of Def. \ref{def:sol_FP}, to  problem \eqref{eq:FP} with $b=\partial_p H(\partial u)$. Since $\bm\in C([0,T]; L^2(\Gamma))$ can be identified with the corresponding Borel measure with density $\bm(t)$ on $\G$ at time $t$, by Prop. \ref{prop:stima_wass}, we also have that
 $\mT$ maps $\mX$ into itself.\\
We prove that  $\mT$ is continuous. Given  $m_n$, $m\in \mX$, let $(u^n(t),\rho^n(t))$, $(u(t),\r(t))$ be the solutions, for any $t\in [0,T]$, of the HJ equations  \eqref{eq:MFG_HJ} with right hand side $F[m^n(t)]$ and, respectively, $F[m(t)]$ and let $\bm^n=\mT(m^n)$, $\bm=\mT(m)$. If $m^n\to m$ in $\mX$, then $\Wass_1(m^n(t),m(t))\to 0$ uniformly for $t\in (0,T)$. Invoking again Theorem~\ref{thm:dip_continua}, by \eqref{eq:dc_0} and (F), for any $t\in [0,T]$ there holds
\begin{align*}
\|u^n(t)-u(t)\|_{C^2(\G)}&\le C_0 \max_{\a\in\mA}\|F_\a[m^n(t)]-F_\a[m(t)]\|_{C^{0,\theta}}\\
&\le C \max_{ \a\in\mA} \Wass_1 (m^n_{\a}(t),m_\a(t)),
\end{align*}
with $C$ independent of $m^n$, $m$. The previous estimate and Prop. \ref{thm: existence and uniqueness FK}.(ii) with $b^n=\partial_p H(x,\partial u^n)$, $b=\partial_p H(x,\partial u)$ imply that $\bm^n$ converges to $\bm$ in $\mX$ and therefore the map $\mT$ is continuous.\\
By Schauder fixed point theorem, we conclude that there exists a fixed point of $\mT$ and therefore a solution of \eqref{eq:MFG}.

\medskip 
\par\noindent
\textit{Uniqueness: }
Suppose that there are two solutions $(u_1,\r_1,m_1)$, $(u_2,\r_2,m_2)$ of \eqref{eq:MFG}.\\
As in~\cite{adlt2}, we introduce the function $\varphi:\Gamma\rightarrow \R$ as
\begin{equation*}
\varphi_\alpha \textrm{ is affine on }[0,\ell_\alpha],\qquad \varphi_\alpha(\nu_i)=\gamma_{i\alpha} \quad \textrm{if }\alpha\in \mA_{i}. 
\end{equation*}
Note that $\varphi\in H^1_b(\Gamma)$ is strictly positive and bounded. Hence the reciprocal $\varphi^{-1}$ is well defined, positive and bounded; this property will play a crucial role in our argument.\\
Set $M=\varphi^{-1}(m_1-m_2)$. The transition condition of~$m_i$ and the definition of~$\varphi$ ensure that $M(t)\in H^1(\Gamma)$ for a.e. $t\in (0,T)$. Hence, we can use Definition~\ref{def:sol_FP} for $m_1$ and $m_2$ with $M(t)$ as a test function obtaining
\begin{multline}\label{eq:unic1}
\half\frac{d}{dt}\|(m_1-m_2)(t)(\varphi^{-1})^{1/2}\|^2_{L^2(\Gamma)}+\|\partial(m_1-m_2)(t)(\mu\varphi^{-1})^{1/2}\|^2_{L^2(\Gamma)}\\=-\int_\Gamma\mu\partial(m_1-m_2)(t)(m_1-m_2)(t)\partial(\varphi^{-1})dx\\-\int_\Gamma b_1(m_1-m_2)(t)\partial M(t) dx-\int_\Gamma (b_1-b_2)m_2(t)\partial M(t) dx 
\end{multline} 
where $b_i=\partial_p H(\cdot,\partial u_i(\cdot,t))$ for $i=1,2$. We now estimate the three integrals in the right hand side of equality~\eqref{eq:unic1}.
Since now on, $C$ will denote a constant that may change from line to line but is always independent of~$M$. By Cauchy-Schwarz inequality, we get
\begin{equation}\label{eq:unic1bis}
\begin{split}
&-\int_\Gamma\mu\partial(m_1-m_2)(t)(m_1-m_2)(t)\partial(\varphi^{-1})dx\\
&\leq \int_\Gamma\left|\mu\partial(m_1-m_2)(t)(m_1-m_2)(t)\partial(\varphi^{-1})\varphi^{1/2}\varphi^{-1/2}\right|dx\\
&
\leq \frac12 \|\partial(m_1-m_2)(t)(\mu\varphi^{-1})^{1/2}\|^2_{L^2(\Gamma)}
+\frac12\int_\Gamma \mu|m_1-m_2|^2(t)|\partial(\varphi^{-1})|^2\varphi dx\\
&\leq 
\frac12 \|\partial(m_1-m_2)(t)(\mu\varphi^{-1})^{1/2}\|^2_{L^2(\Gamma)}
+C\|(m_1-m_2)(t)(\varphi^{-1})^{1/2}\|^2_{L^2(\Gamma)}
\end{split}
\end{equation}
where the last inequality is due to the boundedness of $\mu$ and to the properties of~$\varphi$. Moreover, by the boundedness of $b_1$ and of~$\mu$, again using Cauchy-Schwarz inequality, we have
\begin{equation}\label{eq:unic1ter}
\begin{split}
&-\int_\Gamma b_1(m_1-m_2)(t)\partial M(t) dx\\
&=-\int_\Gamma b_1(m_1-m_2)(t)[\partial(m_1-m_2)(t)\varphi^{-1}-(m_1-m_2)(t)\partial(\varphi^{-1})]dx\\
&\leq\int_\Gamma \left|b_1(m_1-m_2)(t)\partial(m_1-m_2)(t)\varphi^{-1}\right|dx+\int_\Gamma \left|b_1(m_1-m_2)^2(t)\partial(\varphi^{-1})\right|dx\\
&\leq \frac14 \|\partial(m_1-m_2)(t)(\mu\varphi^{-1})^{1/2}\|^2_{L^2(\Gamma)}
+C\|(m_1-m_2)(t)(\varphi^{-1})^{1/2}\|^2_{L^2(\Gamma)}.
\end{split}
\end{equation}
Let us also assume for the moment the following estimate
\begin{multline}\label{eq:unic1quat}
-\int_\Gamma (b_1-b_2)m_2(t)\partial M(t) dx\leq \frac14 \|\partial(m_1-m_2)(t)(\mu\varphi^{-1})^{1/2}\|^2_{L^2(\Gamma)}
\\+C\|(m_1-m_2)(t)(\varphi^{-1})^{1/2}\|^2_{L^2(\Gamma)}
\end{multline}
whose proof is postponed at the end.

Replacing relations~\eqref{eq:unic1bis}, \eqref{eq:unic1ter}, \eqref{eq:unic1quat} in~\eqref{eq:unic1}, we get
\begin{equation*}
\frac{d}{dt}\|(m_1-m_2)(t)(\varphi^{-1})^{1/2}\|^2_{L^2(\Gamma)}\leq C\|(m_1-m_2)(t)(\varphi^{-1})^{1/2}\|^2_{L^2(\Gamma)} .
\end{equation*}
Since $m_1(0)=m_2(0)$, by the previous inequality we get $m_1(t)=m_2(t)$ for all $t\in [0,T]$, hence
  $u_1=u_2$ in $\G\times [0,T]$ and $\r_1=\r_2$.

It remains only to prove inequality~\eqref{eq:unic1quat}. To this end, we first estimate
\begin{eqnarray*}
\|b_1-b_2\|_{L^\infty(\Gamma)}&=&
\|\partial_p H(\cdot,\partial u_1(\cdot,t))-\partial_p H(\cdot,\partial u_2(\cdot,t))\|_{L^\infty(\Gamma)}\\
&\leq& C \|\partial u_1(\cdot,t)-\partial u_2(\cdot,t)\|_{L^\infty(\Gamma)}.
\end{eqnarray*}
Moreover, applying Theorem~\ref{thm:dip_continua} on $H_i(x,p)=H(x,p)-F[m_i(t)]$,we get 
\begin{equation*}
\|\partial u_1(\cdot,t)-\partial u_2(\cdot,t)\|_{L^\infty(\Gamma)}\leq
C\Wass_1 (m_1(t),m_2(t)).
\end{equation*}
By the last two inequalities, for $\d=\Wass_1 (m_1(t),m_2(t))$, we get $\|b_1-b_2\|_{L^\infty(\Gamma)}\leq C \d$ and we deduce
\begin{multline}\label{eq:unic2}
\int_\Gamma \left|(b_1-b_2)m_2(t)\partial M(t) \right|dx\leq C \int_\Gamma \d \left|m_2(t)\partial M(t) \right|dx\\ 
\qquad\leq \frac18 \int_\Gamma \mu|\partial M(t)|^2\varphi dx
+C \int_\Gamma \frac{\d^2 |m_2(t)|^2}{\mu\varphi}dx. 
\end{multline}
We denote $I_1$ and $I_2$ respectively the two integrals in the right hand side of the last inequality. We have
\begin{eqnarray*}
I_1&\leq& 2 \int_\Gamma \left[\mu|\partial (m_1-m_2)|^2\varphi^{-1} +\mu|m_1-m_2|^2|\partial (\varphi^{-1})|^2\varphi\right]dx\\
&\leq& 2 \|\partial(m_1-m_2)(\mu\varphi^{-1})^{1/2}\|^2_{L^2(\Gamma)}+
C\|(m_1-m_2)(\varphi^{-1})^{1/2}\|^2_{L^2(\Gamma)}.
\end{eqnarray*}
Moreover, since $m_2\in C([0,T],L^2(\Gamma))$, we have
\begin{equation*}
I_2=C\d^2\int_\Gamma |m_2|^2dx\leq C\d^2\leq C\|m_1-m_2\|^2_{L^2(\Gamma)}\leq C \|(m_1-m_2)(\varphi^{-1})^{1/2}\|^2_{L^2(\Gamma)}
\end{equation*}
where we used the definition of~$\d$ and the properties of~$\varphi$. Replacing these estimates for~$I_1$ and~$I_2$ in~\eqref{eq:unic2}, we accomplish the proof of inequality~\eqref{eq:unic1quat}.
\end{proof}

\section{Homogenization of HJ equations defined on a  lattice structure}\label{sec:homog}
In this section, we describe an application of the continuous dependence estimate in Section \ref{sec:continuous_dep_est} to the study of a homogenization problem for a HJ equation defined on a periodic network.\\
For $\e\in (0,1]$, let  $\G^\e$ be the periodic network generated by the lattice $\e \ZZ^N$. Hence
$\cV^\e=\e \ZZ^N$ and $\cE^\e =\left\{ \Gamma^\e_\a, \a\in\mA^\e\right\}$, where
\[
\Gamma^\e_\a=\left\{  ym+ (\e-y)n:\, y\in (0,\e)\right\}
\] for some $m,n\in\Z^N$ with  $|m-n|=1$.  Since  $\G$ is a lattice, there are $2N$ edges   coming out of each vertex $\nu_i\in \mV^\e$, in the directions of the vectors $e_k$ of the canonical basis of $\R^N$ and in the opposite directions $e_{-k}$.\\
For $k\in\ZZ^N$, we define
$\Gamma^\e_\a+k=\{ y(m+k)+(\e-y)(n+k):\, y\in (0,\e) \}$ and
we say that a function $\phi:\G^1\to\R$ is $\G^1$-periodic   if
\[
\phi_\b=\phi_\a \quad \text{if $\G^1_\b=\G^1_\a+k$, $k\in\ZZ^N$.}
\]
On the network $\G^\e$, we consider the problem
\begin{equation}\label{eq:HJ_hom}
\begin{cases}
-\mu_{\alpha}\partial^{2}\ue+H\left(x,\frac{x}{\e},\partial \ue\right)+\ue=0, & x\in\left(\Gamma^\e_\alpha\backslash\mV^\e\right),\alpha\in\mA^\e,\\
{\displaystyle \sum_{\alpha\in\mA_{i}}\gamma_{i\alpha}\mu_{\alpha}\partial_{\alpha}\ue(\nu_i)=0,} 
& \nu_{i}\in\mV^\e,\\
\ue|_{\Gamma_{\alpha}}(\nu_i)=\ue|_{\Gamma_{\beta}}(\nu_i), & \alpha,\beta\in\mA^\e_{i},\nu_{i}\in\mV^\e.
\end{cases}
\end{equation}
with 
\[
H_\a(x,y, p)=\sup_{a\in A}\left\{ - b_\a(x,y,a)p-f_\a(x,y,a)\right\}.
\]
We assume that $A$ is a compact metric space and $b,f :\R^N\times\G^1\times\R\to \R$ satisfy
\begin{itemize}
\item[(i)] $b_\a,f_\a:\R^N\times\G^1_\a\times\R\to \R$, $\a\in\mA$, are continuous and
there exist $K,L>0$ such that for $a\in A$, $x_1, x_2\in \R^N$ and $y_1,y_2\in\G^1_\a$, $\a\in\mA$,  there holds for $\phi_\a=b_\a,f_\a$ 	
\begin{align*}
&|\phi_\a(x_1,y_1,a)|\leq K,\\
& |\phi_\a(x_1,y_1,a)-\phi_\a(x_2,y_2,a)|\leq L(|x_1-x_2|+|y_1-y_2|); 
\end{align*}
\item[(ii)]  $b(x,\cdot,p)$, $f(x,\cdot,p)$ are $\G^1$-periodic;
\item[(iii)]   $\m_\a$ and $\g_{i,\a}$ only depend on the direction~$e_k$  parallel to~$\G_\a$ and $\g_{i,\a}=\g_\a$, for $\a\in \mA$ and $i\in I$.
\end{itemize}
We denote with $\mS^N$ the space of the symmetric $N\times N$ matrices.\\
We consider the effective problem   
\begin{equation}\label{eq:HJ_eff}
u +\bar H(x,Du, D^2u)=0 \qquad x\in\R^N,
\end{equation}
where the effective Hamiltonian $\bar H$ is defined as follows: for every $(x,P,X)\in\R^N\times\R^N\times \mS^N$ fixed, the value $\bar H(x,P,X)$ is  equal to $-\rho$,  where $\rho$   is the unique constant for which there exists a couple $(v,\r)$, with $v$ $\G^1$-periodic and $\r\in\R$, solution to
\begin{equation}\label{eq:HJ_cell}
\begin{cases}
-\mu_{\alpha}\partial^{2}(v+Xy\cdot y/2)+H(x,y,\pd( P\cdot y))+\rho=0, &  y\in\left(\Gamma^1_{\alpha}\backslash\mV^1\right),\alpha\in\mA^1,\\
{\displaystyle \sum_{\alpha\in\mA_{i}}\gamma_{i\alpha}\mu_{\alpha}\partial_{\alpha}v(\nu_i)=0,} 
& \nu_{i}\in\mV^1,\\
v|_{\Gamma_{\alpha}}(\nu_i)=v|_{\Gamma_{\beta}}(\nu_i), & \alpha,\beta\in\mA^1_{i},\nu_{i}\in\mV^1.
\end{cases}
\end{equation}
To display the dependence of $v$ with respect to $(x,P,X)$, we will denote with $v(\cdot;x,P,X)$ the  solution to \eqref{eq:HJ_cell}.\\

We need some preliminary results whose proof are postponed to the Appendix; note that the bound~\eqref{eq:corr_prp2} relies on the continuous dependence estimates of Sect.~\ref{sec:continuous_dep_est}.

\begin{lemma}\label{lem:Ham_eff}
For any $(x,P,X)\in\R^N\times\R^N\times \mS^N$, there is a unique $\r\in\R$ for which there exists a $\G^1$-periodic solution to \eqref{eq:HJ_cell}. Moreover 
\begin{equation}\label{eq:H_eff}
\rho=-\frac{\sum_{k=1}^N\g_k\Big[-Xe_k\cdot e_k+\int_{e_k}H(x,y,P\cdot e_k) dy\Big]}{\sum_{k=1}^N\g_k}
\end{equation}
and there exists a constant $\bar C_1$ such that
\begin{eqnarray}\label{eq:corr_prp1}
&&\|v(\cdot;x_1, P_1,X_1)\|_{C^{2,\th}(\G)}\le \bar C_1(1+|P_1|+|X_1|)\\\notag 
&&\|v(\cdot;x_1,P_1,X_1)-v(\cdot;x_2,P_2,X_2)\|_{L^\infty(\G)}\le \bar C_2(|P_1-P_2|+ |X_1-X_2|)\\ \label{eq:corr_prp2}
&&+ \bar C_1|x_1-x_2|(1+|P_1|\wedge|P_2|+|X_1|\wedge|X_2|)
\end{eqnarray}
for every $(x_1,P_1,X_1)$, $(x_2,P_2,X_2)\in\R^N\times\R^N\times \mS^N$, where $\th\in (0,1]$ as in \eqref{eq:bound_3}.
\end{lemma}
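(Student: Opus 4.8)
\textbf{Proof plan for Lemma~\ref{lem:Ham_eff}.}

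The plan is to reduce everything to the ergodic problem analyzed in Proposition~\ref{lemma:bounds_discount_ergodic} and to the continuous dependence estimate of Theorem~\ref{thm:dip_continua}, after a change of unknown that absorbs the quadratic and linear terms $Xy\cdot y/2$ and $P\cdot y$. First I would set $w(y)=v(y)+Xy\cdot y/2$ (edge by edge, using the parametrization $\pi_\alpha$); then $w$ solves an equation of the form $-\mu_\alpha\partial^2 w+\widetilde H_\alpha(x,y,\partial w)+\rho=0$ on each edge, with $\widetilde H_\alpha(x,y,q):=H_\alpha(x,y,q-\partial(Xy\cdot y/2)+\partial(P\cdot y))$, where the shift is a smooth bounded function of $y$ depending affinely on $P$ and $X$. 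Since $\G^1$ is a lattice with $2N$ edges at each vertex in the coordinate directions, and by assumption~(iii) the coefficients $\mu_\alpha,\gamma_{i\alpha}$ depend only on the direction $e_k$ of $\G_\alpha$, the Kirchhoff condition $\sum_{\alpha\in\mA_i}\gamma_{i\alpha}\mu_\alpha\partial_\alpha v(\nu_i)=0$ together with continuity at the vertices forces, on a $\G^1$-periodic solution, that the problem decouples into $N$ independent one-dimensional periodic ergodic problems, one per direction $e_k$. The hard part is really this decoupling/reduction step, because one must check that a $\G^1$-periodic function on the lattice, subject to continuity and the Kirchhoff relations, reduces to $N$ scalar periodic functions of one variable; the shift terms $\partial(Xy\cdot y/2)$ and $\partial(P\cdot y)$ restricted to an edge in direction $e_k$ are constants ($Xe_k\cdot e_k$ up to translation, and $P\cdot e_k$), so on each $e_k$-edge one gets the classical scalar periodic cell problem $-\mu_k v''+H(x,y,P\cdot e_k)+\rho=0$ after noting $\partial^2(Xy\cdot y/2)=Xe_k\cdot e_k$.

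For existence and uniqueness of $\rho$, I would then invoke the one-dimensional periodic ergodic theory (equivalently, Proposition~\ref{lemma:bounds_discount_ergodic} applied on the circle of length $1$ in direction $e_k$, or the direct argument for the discounted approximation and its vanishing-discount limit): for each $k$ there is a unique constant $\rho_k$ admitting a periodic solution $v_k$, and the vertex conditions glue these only if all the $\rho_k$ coincide with a single $\rho$. Writing the solvability condition for the scalar problem $-\mu_k v_k''=-H(x,y,P\cdot e_k)-\rho$ on the unit circle — namely that the right-hand side has zero average — gives $\rho=-\int_{e_k}H(x,y,P\cdot e_k)\,dy+Xe_k\cdot e_k$ for each $k$; but once we re-impose the Kirchhoff balance at the vertices (which weights the directional fluxes by $\gamma_k$), the compatible global $\rho$ is exactly the weighted average
\[
\rho=-\frac{\sum_{k=1}^N\gamma_k\Big[-Xe_k\cdot e_k+\int_{e_k}H(x,y,P\cdot e_k)\,dy\Big]}{\sum_{k=1}^N\gamma_k},
\]
which is formula~\eqref{eq:H_eff}. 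I would make this precise by testing the equation against the affine weight $\varphi$ (as in the uniqueness proof of the previous section, where $\varphi_\alpha(\nu_i)=\gamma_{i\alpha}$), integrating by parts over $\G^1$, and using that the boundary terms vanish by the Kirchhoff condition, leaving exactly the weighted balance of the edge averages.

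Finally, for the bounds~\eqref{eq:corr_prp1}–\eqref{eq:corr_prp2}: the $C^{2,\theta}$ bound on $v(\cdot;x,P,X)$ follows from the $C^{2,\theta}$ bound of Proposition~\ref{lemma:bounds_discount_ergodic}(ii) applied to the shifted Hamiltonian $\widetilde H$, whose $C^{1,\tau}$-seminorm in the gradient variable is controlled by that of $H$ while its sup-norm on the relevant gradient ball grows linearly in $|P|+|X|$ because of the affine shift $\partial(Xy\cdot y/2)-\partial(P\cdot y)$; undoing the change of variables $v=w-Xy\cdot y/2$ only adds a term of size $|X|$, giving $\|v\|_{C^{2,\theta}}\le \bar C_1(1+|P|+|X|)$. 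For the Lipschitz-type estimate~\eqref{eq:corr_prp2} I would apply Theorem~\ref{thm:dip_continua} (with the parameter $\lambda$ replaced by the order-one zeroth-order coefficient — the same proof by contradiction works, or one uses the $L^\infty$ version of Remark~\ref{rmk:dc_Linfinito}) to the two Hamiltonians obtained from $(x_1,P_1,X_1)$ and $(x_2,P_2,X_2)$: the distance between the corresponding $b$'s and $f$'s is bounded by $L|x_1-x_2|$ (from assumption~(i)) plus a term $C(|P_1-P_2|+|X_1-X_2|)$ coming from the gradient shift (here the factor $1+|P_1|\wedge|P_2|+|X_1|\wedge|X_2|$ enters, since $H$ is only locally Lipschitz in $p$ and one estimates on the smaller of the two gradient balls), and the $x$-difference of the effective constant $\rho$ from~\eqref{eq:H_eff} is likewise $O(|x_1-x_2|)$. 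Collecting these and again undoing the quadratic change of variables yields~\eqref{eq:corr_prp2}. The main obstacle throughout is the first step — justifying cleanly that a $\G^1$-periodic solution on the lattice, with continuity and Kirchhoff conditions, is equivalent to $N$ decoupled scalar periodic problems and that the solvability constant is the weighted average~\eqref{eq:H_eff}; once that is in place the rest is a direct transcription of the scalar periodic ergodic theory and of Theorem~\ref{thm:dip_continua}.
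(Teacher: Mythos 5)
The central step of your plan --- the claim that, on a $\G^1$-periodic solution, the cell problem \eqref{eq:HJ_cell} decouples into $N$ independent one-dimensional periodic ergodic problems, one per direction $e_k$ --- is wrong, and your own write-up exposes the contradiction: if each coordinate circle carried a closed scalar periodic problem, its solvability condition would force its own constant $\rho_k=Xe_k\cdot e_k-\int_{e_k}H(x,y,P\cdot e_k)\,dy$, and a single $\rho$ could exist only if all the $\rho_k$ coincided, which is false in general; yet formula \eqref{eq:H_eff} is precisely a nontrivial weighted average of these distinct quantities. What you are missing is that $\G^1$-periodicity of $v$ does \emph{not} make $\partial v$ periodic along each coordinate circle: at a vertex the one-sided derivatives from the two $e_k$-edges need not match, so $\int_{e_k}\partial^2 v\,dy=-\partial_{e_k}v(0)-\partial_{-e_k}v(0)$ is in general nonzero for each fixed $k$, and only the $\g_k\mu_k$-weighted sum of these defects over all $2N$ incident edges vanishes, by the Kirchhoff condition. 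Hence the directions stay coupled through the vertices, there is one solvability condition (the weighted one), not $N$ of them, and the correct derivation of \eqref{eq:H_eff} is exactly the computation you relegate to a parenthetical at the end of your second paragraph: multiply the equation on the edge in direction $e_k$ by $\g_k$, integrate over the edge, sum over $k$, convert $\int_{e_k}\partial^2 v$ into the two outward vertex derivatives by periodicity, and annihilate their weighted sum with the Kirchhoff condition, using $\g_k=\g_{-k}$. This is the paper's proof and it requires no decoupling. Existence and uniqueness of $\rho$ should likewise be obtained for the whole network problem at once (discounted approximation on the compact quotient network and vanishing discount, as in Proposition~\ref{lemma:bounds_discount_ergodic}, or via the references the paper cites), not direction by direction.

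The remaining items of your plan are consistent with the paper: \eqref{eq:corr_prp1} follows from the standard a priori bounds of Proposition~\ref{lemma:bounds_discount_ergodic} applied to the Hamiltonian shifted by the affine gradient term (whence the linear growth in $|P|+|X|$), and \eqref{eq:corr_prp2} is obtained, as you indicate, from the $L^\infty$ continuous dependence estimate \eqref{eq:dc_infinito} of Remark~\ref{rmk:dc_Linfinito} together with \eqref{eq:corr_prp1}. But the decoupling step must be discarded, not ``made precise''.
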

\begin{remark}
The formula~\eqref{eq:H_eff} entails that the effective operator $\bar H$ is convex and uniformly elliptic in $X$ and there exists a constant ${\bar C}_1$ such that
\begin{align*}
\left|\bar H(x_1,P_1,X_1)-\bar H(x_2,P_2,X_2)\right|\le {\bar C}_1(|P_1-P_2|+|X_1-X_2|)\\
+C|x_1-x_2|(1+|P_1|\wedge|P_2|+|X_1|\wedge|X_2|)
\end{align*}	
for every $(x_1,P_1,X_1)$, $(x_2,P_2,X_2)\in\R^N\times\R^N\times \mS^N$.
\end{remark}
\begin{lemma}\label{lem:HJ_eff}
The problems \eqref{eq:HJ_hom} and \eqref{eq:HJ_eff} admit a unique bounded solution $\ue$ and, respectively,   $u$. Moreover, there exists a constant $C_0$ such that
\begin{align}
\|\ue\|_{C^{2,\th}(\G^\e)}\le C_0 , \qquad
\|u\|_{C^{2,\d}(\R^N)}\le C_0 \label{eq:est_sol_HJ_eff}
\end{align}
for some  $\th,\,\d\in (0,1]$. 
\end{lemma}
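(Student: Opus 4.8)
The plan is to treat the two equations \eqref{eq:HJ_hom} and \eqref{eq:HJ_eff} separately, since the first lives on the $\e$-network and the second on the Euclidean space. For the effective problem \eqref{eq:HJ_eff}, existence and uniqueness of a bounded solution, together with the $C^{2,\d}$-estimate, follow from the classical theory of viscous Hamilton-Jacobi equations in $\R^N$ once one knows that $\bar H$ is continuous, convex, uniformly elliptic in $X$ and satisfies the Lipschitz/structure estimates recorded in the Remark after Lemma~\ref{lem:Ham_eff}: uniqueness is a comparison principle for the proper elliptic operator $u+\bar H(x,Du,D^2u)$ (strict monotonicity in $u$), a priori bounds come from $\pm K/(\text{something})$ type barriers using $\|\bar H(x,0,0)\|_\infty\le K$ (which is immediate from \eqref{eq:H_eff} and (H2)), and the $C^{2,\d}$ bound is interior elliptic regularity (Schauder) applied to the uniformly elliptic equation, using the Lipschitz dependence of $\bar H$ in $x$ and the gradient bound which itself comes from a Bernstein-type or comparison argument exploiting convexity and the structure condition. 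Alternatively, since the effective equation has the form of a stationary viscous HJB equation with convex Hamiltonian, one may quote directly the stochastic-control representation and standard results.

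For the problem \eqref{eq:HJ_hom} on $\G^\e$, the key observation is that, after rescaling $x=\e\xi$, the network $\G^\e$ becomes $\G^1$ and the equation becomes a discounted HJ equation of exactly the form \eqref{eq:HJ_discount} but with discount parameter $\l=\e^2$ (and rescaled diffusion/Hamiltonian). More precisely, setting $v^\e(\xi):=\ue(\e\xi)$ one gets $-\mu_\a\partial^2 v^\e+\e^2 H(\e\xi,\xi,\e^{-1}\partial v^\e)+\e^2 v^\e=0$; one checks that the rescaled Hamiltonian still satisfies (H1)--(H2) with constants independent of $\e$ (here one uses that $b,f$ are bounded by $K$ and Lipschitz, and the periodicity to transfer everything to the fundamental domain). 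Then existence, uniqueness and the a priori $C^{2,\th}$-bound follow from Proposition~\ref{lemma:bounds_discount_ergodic} (or rather its analogue on the noncompact periodic network $\G^1$, which holds because all estimates are local in the edges and uniform across translated cells by periodicity), together with Lemma~\ref{lem:comp_prin} for uniqueness and boundedness. Undoing the rescaling, the $C^{2,\th}(\G^\e)$-norm of $\ue$ is then bounded by a constant $C_0$ independent of $\e$ — one has to be a little careful about how the $C^{2,\th}$-norm scales, but the first-order part $\|\partial\ue\|_\infty$ and second-order part $\|\partial^2\ue\|_\infty$ are controlled by combining the $L^\infty$-bound $\|\ue\|_\infty\le K$ with interior Schauder estimates on each edge at unit scale after rescaling.

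I expect the main obstacle to be the uniformity in $\e$ of the regularity estimate \eqref{eq:est_sol_HJ_eff} for $\ue$. The bound $\|\ue\|_{L^\infty}\le K$ is easy from the comparison principle (constants $\pm K$ are super/subsolutions), but passing from the $L^\infty$-bound to a uniform $C^{2,\th}$-bound requires that the Schauder constants on the edges and the matching conditions at the vertices do not degenerate as $\e\to 0$; this is precisely why the rescaling to $\G^1$ is essential, because at unit scale the edge lengths are $1$ and the Kirchhoff conditions have $\e$-independent coefficients, so one may invoke the uniform-in-$\e$ estimates of \cite{al,cms,adlt1} cited in Proposition~\ref{lemma:bounds_discount_ergodic}. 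The Hölder exponents $\th$ and $\d$ are then whatever those references produce, and one takes the minimum. A secondary, more routine point is checking that the cell problem value $\bar H$ inherits enough regularity in $x$ for the effective equation to admit a $C^{2,\d}$ solution; this is exactly the content of Lemma~\ref{lem:Ham_eff} and the following Remark, which we are allowed to assume, so it reduces to quoting standard elliptic theory.
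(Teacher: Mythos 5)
The paper itself gives no proof of this lemma: it simply refers to \cite[Theorem 15.5.1]{bc}, \cite{ls}, \cite{Morfe} for \eqref{eq:HJ_hom} and to \cite{ab,al} for \eqref{eq:HJ_eff}, so your write-up is necessarily more explicit than the text. Your treatment of the effective equation is fine and consistent with those citations: by \eqref{eq:H_eff} the operator $\bar H$ is linear (hence convex) and uniformly elliptic in $X$, Lipschitz in $P$ and in $x$, so comparison for the proper operator $u\mapsto u+\bar H(x,Du,D^2u)$, constant barriers, and Evans--Krylov/Schauder theory give the unique bounded solution and the $C^{2,\d}$ bound.

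There is, however, a genuine gap in your argument for \eqref{eq:HJ_hom}, precisely at the point you flag as the main obstacle. The rescaling $x=\e\xi$ sends $\partial_x\ue$ to $\e^{-1}\partial_\xi v^\e$ and $\partial^2_x\ue$ to $\e^{-2}\partial^2_\xi v^\e$. The estimates you can actually extract at unit scale --- Proposition~\ref{lemma:bounds_discount_ergodic} applied to the rescaled problem with discount $\l=\e^2$, or interior Schauder on unit-length edges starting from $\|v^\e\|_\infty\le K$ --- give $\|\partial_\xi v^\e\|_\infty+\|\partial^2_\xi v^\e\|_\infty\le C$ with $C$ independent of $\e$ but not smaller than a fixed constant (note the additive ``$1+$'' in the bound $C_1(1+K+L)$ of \eqref{eq:bound_2}, which does not shrink even though the rescaled data are $O(\e)$). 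Undoing the scaling then yields only $\|\partial_x\ue\|_\infty\le C\e^{-1}$ and $\|\partial^2_x\ue\|_\infty\le C\e^{-2}$, so the sentence ``undoing the rescaling, the $C^{2,\th}(\G^\e)$-norm of $\ue$ is bounded by a constant independent of $\e$'' does not follow from what precedes it. What is missing is a uniform gradient bound in the \emph{original} variables. The standard device is a translation/comparison argument: for a lattice vector $h\in\e\Z^N$, the function $\ue(\cdot+h)$ solves the same problem with $b,f$ shifted only in the slow variable, so Lemma~\ref{lem:comp_prin} and the Lipschitz continuity of $b,f$ in $x$ give $\|\ue(\cdot+h)-\ue\|_\infty\le L|h|$; combined with a local estimate inside a single cell this produces $\|\partial\ue\|_\infty\le C$ uniformly in $\e$, after which the equation itself bootstraps to the bound on $\partial^2\ue$ and its H\"older seminorm. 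This is the ingredient supplied by the references the paper cites (notably \cite{Morfe} and \cite{bc}), and it is the piece your proposal would need to add to be complete.
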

\begin{lemma}\label{lem:kirchhoff}
If $g:\R^N\to\R$ is a smooth function, then it satisfies the Kirchhoff condition at  $\nu_i\in\mV^\e$ in~\eqref{eq:HJ_hom}, i.e.
\begin{equation*}
\sum_{\alpha\in\mA_{i}}\gamma_{i\alpha}\mu_{\alpha}\partial_\a g(\nu_i)=0.
\end{equation*}
\end{lemma}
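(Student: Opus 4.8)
The plan is to reduce the Kirchhoff condition for a smooth function $g:\R^N\to\R$ at a vertex $\nu_i\in\mV^\e$ to a statement about one-dimensional directional derivatives and then exploit the symmetry of the lattice together with assumption~(iii) on the coefficients. First I would recall the structure of the lattice: at each vertex $\nu_i$ there are exactly $2N$ incident edges, one in each of the directions $\pm e_k$, $k=1,\dots,N$, where $e_k$ is the $k$-th vector of the canonical basis. On the edge $\Gamma^\e_\a$ parallel to $e_k$ and emanating from $\nu_i$, the outward derivative $\partial_\a g(\nu_i)$ is, by the very definition of the arc derivative in Section~\ref{sect:definitions}, nothing but the one-sided directional derivative of the smooth function $g$ at $\nu_i$ along the outward unit vector of that edge. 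Since $g$ is smooth on $\R^N$, this one-sided derivative equals the honest directional derivative: on the edge in direction $+e_k$ it is $\partial_{x_k} g(\nu_i)$, and on the edge in direction $-e_k$ it is $-\partial_{x_k} g(\nu_i)$.

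Next I would substitute this into the Kirchhoff sum. Using assumption~(iii), the weights $\gamma_{i\alpha}$ and the diffusivities $\mu_\alpha$ depend only on the direction $e_k$ parallel to $\Gamma_\a$, and $\gamma_{i\a}=\gamma_\a$; in particular the edge in direction $+e_k$ and the edge in direction $-e_k$ carry the same coefficient product $\gamma_k\mu_k$. Grouping the $2N$ terms into the $N$ antipodal pairs, the contribution of the pair associated with direction $e_k$ is
\begin{equation*}
\gamma_k\mu_k\,\partial_{x_k}g(\nu_i)+\gamma_k\mu_k\,\bigl(-\partial_{x_k}g(\nu_i)\bigr)=0.
\end{equation*}
Summing over $k=1,\dots,N$ gives $\sum_{\alpha\in\mA_i}\gamma_{i\alpha}\mu_\alpha\partial_\a g(\nu_i)=0$, which is exactly the asserted Kirchhoff condition.

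There is essentially no analytic obstacle here: the only point requiring a little care is matching the sign conventions of the excerpt, namely that the outward derivative $\partial_\a g(\nu_i)$ is defined via $\partial_\a v(\nu_i)=n_{i\alpha}\,\partial v_\alpha(\pi_\alpha^{-1}(\nu_i))$ and the parametrization $\pi_\alpha$, so one must check that the sign $n_{i\alpha}$ combined with the direction of parametrization indeed produces $\pm\partial_{x_k}g(\nu_i)$ on the two edges of a pair with \emph{opposite} signs. Once this bookkeeping is done, the cancellation is immediate. Thus the main (and mild) obstacle is simply being consistent with the orientation conventions; no smoothness beyond $C^1$ of $g$ at the vertices is actually needed, and the periodicity of the lattice is used only through the direction-dependence of the coefficients in~(iii).
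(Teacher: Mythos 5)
Your proposal is correct and is essentially the paper's own argument: the proof in the appendix likewise writes the Kirchhoff sum as $Dg(\nu_i)\cdot\sum_{k}(\gamma_k\mu_k e_k+\gamma_k\mu_k e_{-k})$ and cancels the antipodal pairs using assumption (iii). The sign bookkeeping you flag is the only delicate point, and since the two edges of each pair carry opposite signs regardless of the convention, the cancellation goes through exactly as you describe.
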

\begin{theorem}\label{thm:est_rate}
Let $\ue$ and $u$ be respectively  the solution of \eqref{eq:HJ_hom} and \eqref{eq:HJ_eff}. Then, there exists a constant $M$ such that for $\e$ sufficiently small
\begin{equation*}
\|u^\e-u\|_{L^\infty(\G^\e)}\le M\e^\d,
\end{equation*}
where $\d $ as in \eqref{eq:est_sol_HJ_eff}.
\end{theorem}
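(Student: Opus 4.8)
The plan is to use the perturbed test function method together with a localization (doubling-type) argument. By Lemma~\ref{lem:HJ_eff}, both $\ue$ and $u$ are classical solutions with $C^2$-norms bounded by a constant $C_0$ independent of $\e$, and~\eqref{eq:HJ_hom} admits a comparison principle for bounded sub/supersolutions (thanks to the zero-th order term $+\ue$); hence it suffices to prove $\ue\le u+M\e^\d$ on $\G^\e$, the reverse inequality being symmetric. After a routine penalization $-\eta\sqrt{1+|x|^2}$ handling the unboundedness of $\G^\e$ (to be removed at the end), we may assume that $m_\e:=\sup_{\G^\e}(\ue-u)=\ue(z)-u(z)$ is attained at some $z\in\G^\e$.

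First I would introduce the frozen corrector. For $z\in\R^N$ set $v_z:=v(\cdot\,;z,Du(z),D^2u(z))$, the $\G^1$-periodic solution of the cell problem~\eqref{eq:HJ_cell}; by~\eqref{eq:corr_prp1} and $\|u\|_{C^2}\le C_0$ we have $\|v_z\|_{C^{2,\th}(\G^1)}\le\bar C_1(1+2C_0)$ uniformly in $z$. Consider
\[
\phi^\e_z(x):=u(x)+\e^2 v_z(x/\e),\qquad x\in\G^\e .
\]
This function is $C^2$ on every edge, continuous at the vertices of $\G^\e$, and satisfies the Kirchhoff condition there \emph{exactly}: $u$ does by Lemma~\ref{lem:kirchhoff}, and $v_z(\cdot/\e)$ does because $v_z$ solves~\eqref{eq:HJ_cell} and the rescaling $y\mapsto y/\e$ preserves that condition (the weights $\mu_\a,\gamma_{i\alpha}$ being the same on $\G^\e$ and $\G^1$). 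The heart of the matter is a \emph{consistency estimate}: on an edge parallel to $e_k$ one has $\partial\phi^\e_z(x)=\nabla u(x)\cdot e_k+\e(\partial v_z)(x/\e)$ and $\partial^2\phi^\e_z(x)=\nabla^2u(x)\,e_k\cdot e_k+(\partial^2 v_z)(x/\e)$; using the cell equation~\eqref{eq:HJ_cell} at the frozen parameters to cancel $-\mu_\a(\partial^2 v_z)(x/\e)$ against $-H(z,x/\e,Du(z)\cdot e_k)-\rho_z$, the identity $-\rho_z=\bar H(z,Du(z),D^2u(z))=-u(z)$ coming from the effective equation~\eqref{eq:HJ_eff}, a Taylor expansion of $u$ at $z$ (with Hölder-$\d$ remainder of $\nabla^2u$, as in~\eqref{eq:est_sol_HJ_eff}) and the Lipschitz dependence of $H$ on $x$ and on $p$, one gets a structural constant $C$ with
\[
\Bigl|-\mu_\a\,\partial^2\phi^\e_z(x)+H\bigl(x,\tfrac x\e,\partial\phi^\e_z(x)\bigr)+\phi^\e_z(x)\Bigr|\le C\bigl(|x-z|^\d+\e\bigr)
\]
for every $x\in\G^\e_\a$ with $|x-z|\le1$, i.e. $\phi^\e_z$ is an approximate supersolution of~\eqref{eq:HJ_hom} near $z$.

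Finally I would localize and conclude. For a parameter $\sigma>0$ and a fixed integer $m\ge1$, let $\bar x\in\G^\e$ be a maximum point of $x\mapsto\ue(x)-\phi^\e_z(x)-\sigma^{-1}|x-z|^{2m}$; comparing the values at $x=z$ (which is $\ge m_\e-C\e^2$) and at $\bar x$ (which is $\le m_\e+C\e^2$) forces $\sigma^{-1}|\bar x-z|^{2m}\le C\e^2$, so that $|\bar x-z|$, and the gradient and Hessian of the penalization at $\bar x$, are controlled by small powers of $\e$ and $\sigma$. Testing the subsolution $\ue$ from above by $\psi:=\phi^\e_z+\sigma^{-1}|x-z|^{2m}$ at $\bar x$ — at a vertex, since $\ue$, $u$, $v_z(\cdot/\e)$ and the penalization all satisfy the Kirchhoff condition while $\partial_\a(\ue-\psi)(\bar x)\le0$ for every adjacent edge $\a$, all these outward derivatives vanish, hence $\partial\ue=\partial\psi$ and $\partial^2\ue\le\partial^2\psi$ on a chosen edge at $\bar x$ and the interior inequality persists — yields $-\mu_\a\partial^2\psi(\bar x)+H(\bar x,\bar x/\e,\partial\psi(\bar x))+\ue(\bar x)\le0$. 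Plugging in the consistency estimate and the bounds on the penalization terms gives $m_\e\le C(|\bar x-z|^\d+\e)+(\text{penalization contributions})$, and an optimal choice of $\sigma$ as a power of $\e$ yields $m_\e\le M\e^\d$; sending $\eta\to0$ and repeating the argument for $u-\ue$ completes the proof. I expect the main effort to be twofold: the vertex analysis (dealt with as above, using the classical $C^2$-regularity of $\ue$ from Lemma~\ref{lem:HJ_eff} and Lemma~\ref{lem:kirchhoff}), and the fact that $v_z$ depends on the slow data $z$ only Lipschitz-continuously (by~\eqref{eq:corr_prp2}) — hence cannot be differentiated in $z$ — which is exactly what compels us to freeze $z$, to penalize $|x-z|$, and thereby produces the exponent $\d$ in the rate; the bookkeeping of the various $\e$- and $\sigma$-powers in the localization is then the remaining technical point.
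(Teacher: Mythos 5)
Your overall architecture --- perturbed test function with a corrector frozen at the slow variable, a localization penalty to compensate for the merely Lipschitz/H\"older dependence of $v$ on $(x,P,X)$, the vertex analysis via the Kirchhoff condition and Lemma~\ref{lem:kirchhoff}, and the consistency computation combining the cell problem~\eqref{eq:HJ_cell} with the effective equation~\eqref{eq:HJ_eff} --- is exactly the paper's, and your consistency estimate $\bigl|-\mu_\a\partial^2\phi^\e_z+H(x,x/\e,\partial\phi^\e_z)+\phi^\e_z\bigr|\le C(|x-z|^\d+\e)$ is correct. The genuine gap is in the localization step, and it costs you the claimed rate. Your value comparison between $z$ and $\bar x$ only yields $\sigma^{-1}|\bar x-z|^{2m}\le C\e^2$, i.e. $|\bar x-z|\le(C\sigma\e^2)^{1/(2m)}$. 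To conclude you need simultaneously $|\bar x-z|^\d\lesssim\e^\d$ (hence $|\bar x-z|\lesssim\e$, forcing $\sigma\lesssim\e^{2m-2}$) and the penalization Hessian $\sigma^{-1}|\bar x-z|^{2m-2}\lesssim\e^\d$ (it enters the tested inequality with the unfavourable sign); but with $\sigma\sim\e^{2m-2}$ the available bound on that Hessian is $\sigma^{-1}(\sigma\e^2)^{(m-1)/m}=O(1)$. Optimizing $\sigma=\e^{s}$ over $s$ and $m$ gives at best the exponent $2\d/(2+\d)<\d$, so your scheme proves \emph{a} rate but not the stated one.

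The paper's fix is a different localization. It first maximizes $\phi(x)=\ue(x)-u(x)-\e^2 v(x/\e;[u](x))-\frac\eta2|x|^2$ \emph{with the $x$-dependent corrector included}, at some $\hx$, and only then freezes the slow variable at $\hx$ and adds the quadratic penalty $c|x-\hx|^2$ with the small coefficient $c=4\bar C_1(1+2C_0)\e^\d$. The point is that for $x\in\partial B(\hx,\e)\cap\G^\e$ the two corrector terms share the \emph{same} fast argument $x/\e$ and differ only in the slow data, so by~\eqref{eq:corr_prp2} and $u\in C^{2,\d}$ the freezing error is $O(\e^{2}(\e^\d+\e))=O(\e^{2+\d})$, which is beaten by $c\e^2=O(\e^{2+\d})$; hence the new maximum $\tx$ lies in $B(\hx,\e)$ \emph{and} the penalty contributes only $O(c\,\e)$ to the gradient and $O(c)=O(\e^\d)$ to the Hessian. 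In your setup the analogous boundary comparison fails because $v_z(x/\e)-v_z(z/\e)$ is $O(1)$ when $|x-z|=\e$. If you re-run your argument with the first maximization taken over $\ue-u-\e^2v(\cdot/\e;[u](\cdot))$ (minus the penalization at infinity) and a quadratic penalty with coefficient of order $\e^\d$, the rest of your proof --- exclusion of vertices, testing on an edge, sending $\eta\to0$, symmetrizing --- goes through essentially as you describe and recovers the rate $M\e^\d$.
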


\begin{proof}
Given $\e\in (0,1)$, for $\eta\in(0,\infty)$ define the function 
\begin{equation*} 
\phi(x):=\ue(x)-u(x)-\e^2v\left(\frac x\e; [u](x)\right)-\frac\eta 2 |x|^2\qquad\forall x\in \G^\e,
\end{equation*}
where $v\left(y; [u](x)\right):=v(y;x,Du(x),D^2u(x))$ is the solution of \eqref{eq:HJ_cell} with $(x,P,X)=(x,Du(x), D^2u(x))$. Since  $u$, $\ue$ and $v$ are bounded, there exists $\hx\in \G^\e$ where the function $\phi$ attains its maximum. \\
Set $c:=4\bar C_1(1+2C_0)\e^\d$ and introduce the function
\begin{equation*} 
\tphi(x):=\ue(x)-u(x)-\e^2v\left(\frac x\e; [u](\hx)\right)-\frac\eta 2 |x|^2-c|x-\hx|^2 
\end{equation*}
where $|x-\hx|$ is the standard Euclidean distance between $x$ and $\hx$.
We have $\tphi(\hx)=\phi(\hx)$ and also, by the definition of $\hx$,
\begin{eqnarray*}\tphi(\hx)-\tphi(x)&=&[\tphi(\hx)-\phi(x)]+[\phi(x)-\tphi(x)]\geq
\phi(x)-\tilde\phi(x)\\
&\geq&-\e^2\left[v(\frac x\e;[u](x))-v(\frac x\e;[u](\hx))\right]+c\e^2
\end{eqnarray*}
for every $x\in \partial B(\hx,\e)\cap\G_\e$, where $B(\hx,\e)=\{x\in\R^N:|x-\hx|<\e\}$. 
Using the estimates in~\eqref{eq:corr_prp2}, Lemma~\ref{lem:HJ_eff} and recalling the definition of $c$, we get
\begin{align*}
\tilde\phi(\hx)-\tilde\phi(x) \geq & -\bar C_1[2C_0\e^\d 
+(1+\|Du\|_\infty+\|D^2u\|_\infty)\e]\e^2\\
&+4\bar C_1(1+2C_0)\e^{2+\d}> 0
\end{align*}
for every $x\in \partial B(\hx,\e)\cap\G^\e$.
Therefore, $\tilde \phi$ attains a maximum at some point $\tx\in B(\hx,\e)\cap\G^\e$. \\
Let us prove that there exists a constant $M_1>0$ (independent of $\e$ and $\eta$) such that
\begin{equation}\label{eq:stima_hx}
\eta^{\frac 12} |\tx|  \leq M_1.
\end{equation}
By Lemma~\ref{lem:Ham_eff}, Lemma~\ref{lem:HJ_eff}  and the inequality $\phi(\hx)\geq \phi(0)$, we obtain
\begin{equation*}
\frac \eta 2 |\hx|^2 \leq 4C_0 +2\bar C_1(1+2C_0)\e^2.
\end{equation*}
We deduce that, for $M_1$ sufficiently large, we have $\eta^{1/2}|\hx| \leq M_1/2$ and therefore 
$$
\eta^{\frac 12}|\tx|\leq \eta^{\frac 12} |\hx|+\eta^{\frac 12} |\tx-\hx|\leq \eta^{\frac 1 2}\frac{M_1}{2}+\eta^{\frac 12}\e\leq \eta^{\frac 1 2}M_1,
$$
hence  \eqref{eq:stima_hx}.\\
We claim  that there exists a constant~$M$ (independent of $\e$ and $\eta$) such that
\begin{equation}\label{eq:main_est}
u^\e(\tx)-u(\hx)\leq M\left[\e^\d +\eta^{1/2}\right].
\end{equation}
We first show that $\tx\not\in\mV^\e$. Indeed, assume by contradiction that $\tx=\nu_i\in\mV^\e$. By adding the term $-d_\G(x,\tx)^2$, where $d_\G$ is the geodesic distance on the network, it is not restrictive to assume that $\tx$ is a strict maximum point for $\tilde\phi$ and therefore $\partial_\a \tilde\phi(\nu_i)>0$ for all $\a \in\mA^\e_i$ (recall the definition of $\partial_\alpha$ as the outward derivative at the vertex). Since $u^\e$ and $v$ solve respectively \eqref{eq:HJ_hom} and \eqref{eq:HJ_eff}, by Lemma \ref{lem:kirchhoff} we have
\begin{equation*} 
0< \sum_{\alpha\in\mA_{i}}\gamma_{i\alpha}\mu_{\alpha}\partial_{\alpha}\phi(\nu_i)=
\sum_{\alpha\in\mA_{i}}\gamma_{i\alpha}\mu_{\alpha}\partial_\a\left( u+\frac\eta 2 |x|^2+c|x-\hx|^2\right)_{x=\nu_i}=0,
\end{equation*}
a contradiction and therefore $\tx\in (B(\hx,\e)\cap\G^\e)\setminus \mV^\e$. Let $\a\in\mA$ be such that $\tx\in\G^\e_\a$ and $e_\a$ a unit vector parallel to $\G^\e_\a$. Since $\ue$ satisfies  \eqref{eq:HJ_hom}   and $\tx$ is a maximum point for $\ue(x)-[u(x)+\e^2 v(x/\e;[u](\hx))+\eta|x|^2/2+c|x-\hx|^2]$, we have
\begin{equation}\label{eq:test_equation}
\begin{split}
0\geq &u^\e(\tx)- \mu_\a \pd^2\left[ u(x)+\e^2 v(\frac{x}{\e};[u](\hx))+\frac\eta 2|x|^2+c|x-\hx|^2\right]_{x=\tx}+\\
&H\Big(\tx,\frac{\tx}{\e}, \pd\left[u(x)+\e^2   v(\frac{x}{\e};[u](\hx))+\frac\eta 2|x|^2+c|x-\hx|^2\right]_{x=\tx}\Big).
\end{split}
\end{equation}
We compute
\begin{align*}
&\pd\big[u(x)+\e^2   v(\frac{x}{\e};[u](\hx))+\frac\eta 2|x|^2+c|x-\hx|^2\big]_{x=\tx} \\
=& Du(\tx)\cdot e_\a+ \e \pd_y  v(\frac{\tx}{\e};[u](\hx))+\eta\tx\cdot e_\a +2c(\tx-\hx)\cdot e_\a,
\end{align*}
and
\begin{align*} 
&\pd^2\big[ u(x)+\e^2 v(\frac{x}{\e};[u](\hx))+\frac\eta 2|x|^2+c|x-\hx|^2\big]_{x=\tx}=\\
=&D^2u(\tx)e_\a\cdot e_\a+ \pd^2_y v(\frac{\tx}{\e};[u](\hx))]+\eta +2c.
\end{align*}
Replacing  the previous identities in \eqref{eq:test_equation} and using  Lemma \ref{lem:Ham_eff}, Lemma \ref{lem:HJ_eff}, \eqref{eq:stima_hx} and $\tx\in  B(\hx,\e)\cap\G^\e$, we get
\begin{align*}
0\ge &u^\e(\tx)- \mu_\a  \big(D^2u(\tx)e_\a\cdot e_\a+ \pd^2_y v(\tx/\e;[u](\hx))\big)+H\Big(\tx,\frac{\tx}{\e}, Du(\tx)\cdot e_\a\Big)\\
-&M_2\big(\e \bC_2(1+2C_0)+\eta^{1/2}M_1+2c\e+\eta+2c\Big)\ge u^\e(\tx)\\
-& \mu_\a  \big(D^2u(\hx)e_\a\cdot e_\a+ \pd^2_y	v(\tx/\e;[u](\hx))\big)+H\Big(\hx,\frac{\tx}{\e}, Du(\hx)\cdot e_\a\Big)\\
-&M_2(\e^\d+\eta^{1/2})=	u^\e(\tx)+\bar H(\hx,Du(\hx),D^2u(\hx))-M_2(\e^\d+\eta^{1/2})\\
=&u^\e(\tx)-u(\hx)-M_2(\e^\d+\eta^{1/2})
\end{align*}
for some $M_2$, which may change from line to line but is always independent of $\e$ and $\g$; hence \eqref{eq:main_est}.
For every $x\in\G^\e$, by  $\tilde \phi(\tx)\geq \tilde\phi(\hx)=\phi(\hx)\geq \phi(x)$, we get
by \eqref{eq:main_est}, Lemma \ref{lem:HJ_eff} and Lemma \ref{lem:Ham_eff}
\begin{align*}
u^\e(x)-u(x)&\leq [u^\e(\tx)-u(\hx)]+[u(\hx)-u(\tx)]+\\
& \e^2\left[v(x/\e;[u](x))-v(\tx/\e;[u](\hx))\right]+\frac \eta 2 |x|^2\\
&\leq M_2\left[\e^\d +\eta^{1/2}\right]+C_0\e +2\bC_2(1+2C_0)\e^2+\frac \eta 2|x|^2.
\end{align*}
Letting $\eta\to 0^+$, we deduce
\begin{equation*}
u^\e(x)-u(x)\leq M_2\e^\d\qquad \forall x\in\G^\e.
\end{equation*}
Reversing the role of $u$ and $u^\e$, we get the statement.
\end{proof}

\appendix
\section{Appendix}

\begin{proof}[Proof of Lemma \ref{lem:Ham_eff}]
The proofs of existence and uniqueness of such a~$\rho$ and of relation~\eqref{eq:corr_prp1} rely on an easy adaptation of standard techniques; we refer the reader to~\cite{ab,al,evans}. Relation~\eqref{eq:corr_prp2} is due to~\eqref{eq:dc_infinito} and~\eqref{eq:corr_prp1}.


We prove an explicit formula for~$\rho$. Recall that, by the  assumptions, there are $N$ different Hamiltonians  $H_k$ and $N$ viscosity coefficients $\m_k$, $k=1,\dots,N$, corresponding to the vectors $e_k$. Integrating  the HJ equation in \eqref{eq:HJ_cell} along the $N$ arcs  $\G_k$ parallel to $e_k$ exiting from $\nu_i$ and denoting with $\mu_k$, $\g_k$   the corresponding coefficients in the Kirchhoff condition, we have
\begin{equation}\label{eq:calcolo_Hbar1}
0=\sum_{k=1}^N\g_k\int_{e_k}[-\mu_{k}\partial^{2}_{e_k}(v+Xy\cdot y/2)+H(x,y,\pd_{e_k}( P\cdot x))+\rho]dy.
\end{equation}
By periodicity of $v$, we have
\[
\int_{e_k} \partial^{2}_{e_k}v(y)dy=\partial_{e_k}v(1)-\partial_{e_k}v(0)=
-\partial_{-e_k}v(0)-\partial_{e_k}v(0).
\]
Replacing the previous identity in \eqref{eq:calcolo_Hbar1}, we have
\begin{align*}
0&=\sum_{k=1}^N\g_k \mu_{k}[\partial_{-e_k}v(0)+\partial_{e_k}v(0)]+\sum_{k=1}^N\g_k[ Xe_k\cdot e_k\\
&+\int_{e_k}H(x,y, P\cdot e_k) dy]+\rho\sum_{k=1}^N\g_k.
\end{align*}
Therefore, taking into account the Kirchhoff condition in \eqref{eq:HJ_cell} at $\n_i=0$ and observing that $\g_k=\g_{-k}$, where $\g_{-k}$ is the coefficient $\g_\a$ for the arc $-e_k$, we get~\eqref{eq:H_eff}.

\end{proof}
\begin{proof}[Proof of Lemma \ref{lem:HJ_eff}] 
The statement is obtained by standard arguments; for problem~\eqref{eq:HJ_hom},  we refer the reader to \cite[Theorem 15.5.1]{bc}, \cite{ls} and \cite{Morfe} while for problem~\eqref{eq:HJ_eff} we refer to \cite{ab,al}.
\end{proof}
\begin{proof}[Proof of Lemma \ref{lem:kirchhoff}]
Let $g:\R^N\to \R$ be  a smooth function. Since $\gamma_{i\alpha}=\g_\a$ and	$\g_\a$, $\mu_\a$ only depend on the direction $e_k$, parallel to $\G_\a$, we have
\begin{align*}
&\sum_{\alpha\in\mA_{i}}\gamma_{i\alpha}\mu_{\alpha}\partial_\a g(\nu_i)=Dg(\nu_i)\cdot \sum_{k=1}^N  (\g_k \m_k e_k+\g_k \m_k e_{-k}) \\
&=Dg(\nu_i)\cdot \sum_{k=1}^N  (\g_k\m_k e_k-\g_k\m_k e_{k})=0. 
\end{align*}
\end{proof}


\begin{thebibliography}{99}
	
\bibitem{adlt1}
Achdou, Y.; Dao, M.-K.; Ley, O.; Tchou, N. A class of infinite horizon mean field games on networks. Netw. Heterog. Media 14 (2019), no. 3, 537–566.

\bibitem{adlt2}
Achdou, Y.; Dao, M.-K.; Ley, O.; Tchou, N. Finite horizon mean field games on networks. Calc. Var. Partial Differential Equations 59 (2020), no. 5, Paper No. 157, 34 pp.

\bibitem{ab}
Alvarez, O.; Bardi, M. Ergodicity, stabilization, and singular perturbations for Bellman-Isaacs equations. Mem. Amer. Math. Soc. 204 (2010), no. 960.
  
\bibitem{al}
Arisawa M.; Lions P.-L. On ergodic stochastic control. Comm. Partial Differential Equations 23 (1998), no. 11-12, 2187--2217.
  
 

 
\bibitem{bc}
Barles, G.; Chasseigne, E. An Illustrated Guide of the Modern Approches of Hamilton-Jacobi Equations and Control Problems with Discontinuities. Available at https://arxiv.org/abs/1812.09197.
  
\bibitem{bogachev} 
 Bogachev, V. I.; Kolesnikov, A. V. The Monge-Kantorovich problem: achievements, connections, and prospects. (Russian) Uspekhi Mat. Nauk 67 (2012), no. 5 (407), 3-110; translation in Russian Math. Surveys 67 (2012), no. 5, 785-890.

\bibitem{bnv} 
 Bourlard-Jospin, M.; Nicaise, S.; Venel, J.
 Approximation of the two-dimensional Dirichlet problem by continuous and discrete problems on one-dimensional networks.   Confluentes Math. 7 (2015), no. 1, 13--33.

\bibitem{cm}
Camilli, F.; Marchi, C.  Stationary mean field games systems defined on networks. SIAM J. Control Optim. 54 (2016), no. 2, 1085--1103.



\bibitem{cms} 
Camilli, F.; Marchi, C.; Schieborn, D. The vanishing viscosity limit for Hamilton-Jacobi equations on networks. J. Differential Equations 254 (2013), no. 10, 4122--4143.
 

\bibitem{cdi}
Capuzzo Dolcetta, I.; Ishii, H.  On the rate of convergence in homogenization of
Hamilton-Jacobi equations. Indiana Univ. Math. J. 50 (2001), 1113-1129.

\bibitem{cristiani_et_altri}
Cristiani, E.; Priuli, F. S.; Tosin, A. Modeling rationality to control self-organization of crowds: an environmental approach. SIAM J. Appl. Math. 75 (2015), no. 2, 605-629.

\bibitem{evans}
 Evans, L.C. Periodic homogenisation of certain fully nonlinear partial differential equations. Proc. Roy. Soc. Edinburgh Sect. A 120 (1992), no. 3-4, 245-265.

\bibitem{fw}
Freidlin, M.I.;  Wentzell, A.D.
Diffusion processes on graphs and the averaging principle.
Ann. Probab.  21 (1993), 2215--2245.
 
\bibitem{fs}
Freidlin, M.I.;  Sheu, S-J.  Diffusion processes on graphs: stochastic differential equations,
large deviation principle.
  Probab. Theory Related Fields 116 (2000), 181--220.


\bibitem{im}
 Imbert, C.; Monneau, R. Flux-limited solutions for quasi-convex Hamilton-Jacobi equations on networks. Ann. Sci. \'Ec. Norm. Sup\'er. (4) 50 (2017), no. 2, 357-448.
 
 \bibitem{jk}
Jakobsen,  E.R.; Karlsen, K.H.. Continuous dependence estimates for viscosity
 solutions of fully nonlinear degenerate parabolic equations. J. Differential Equations
 183 (2002), 497-525.

\bibitem{ll}
Lasry, J.-M.; Lions, P.-L. Mean field games. Jpn. J. Math. 2 (2007), no. 1, 229--260. 

\bibitem{ls}
Lions, P.-L.; Souganidis, P. Viscosity solutions for junctions: well posedness and stability. Atti Accad. Naz. Lincei Rend. Lincei Mat. Appl. 27 (2016), no. 4, 535--545. 

\bibitem{marchi}
Marchi, C. Continuous dependence estimates for the ergodic problem of Bellman equation with an application to the rate of convergence for the homogenization problem. Calc. Var. Partial Differential Equations 51 (2014), no. 3-4, 539-553.

\bibitem{Morfe}
Morfe, P. S. Convergence \& rates for Hamilton-Jacobi equations with Kirchoff junction conditions. NoDEA Nonlinear Differential Equations Appl. 27 (2020), no. 1, Paper No. 10, 69 pp.

\bibitem{mouzouni} 
Mouzouni, C. 
On quasi-stationary mean field games models. Appl. Math. Optim. 81 (2020), no. 3, 655-684.


\end{thebibliography}
\end{document}